\newtheorem*{remark*}{Remark}
\newtheorem*{prop*}{Proposition}
\newtheorem{thm}[subsection]{Theorem}
\newtheorem*{thm*}{Theorem}
\newtheorem*{lem*}{Lemma}
\newtheorem*{cor*}{Corollary}
\newtheorem*{result*}{Result}
\newtheorem*{ass*}{Assumption}
\def\o{\circ}
\def\X{\mathfrak X}
\def\de{\delta}
\def\ph{\varphi}
\def\Ga{\Gamma}
\def\De{\Delta}
\def\Ph{\Phi}
\def\Ps{\Psi}
\def\i{^{-1}}
\def\x{\times}
\def\p{\partial}
\let\on=\operatorname
\def\L{\mathcal L}
\def\grad{\on{grad}}%
\def\AMSonly#1{}
\def\Id{\on{Id}}
\def\R{\mathbb R}
\def\Tr{\on{Tr}}
\def\vol{{\on{vol}}}
\def\Vol{{\on{Vol}}}
\def\Imm{{\on{Imm}}}
\def\Diff{{\on{Diff}}}
\def\g{\overline{g}}
\def\grad{\on{grad}}
\def\Nor{{\on{Nor}}}
\def\hor{{\on{hor}}}
\def\Length{{\on{Len}}}
\def\undersetbrace#1\to#2{\underbrace{#2}_{#1}}								
\def\oversetbrace#1\to#2{\overbrace{#2}^{#1}}
\def\AMSunderset#1\to#2{\underset{#1}{#2}}
\def\AMSoverset#1\to#2{\overset{#1}{#2}}
\def\norm#1{\left\|{#1}\right\|}
\def\adj#1{\on{Adj}({#1})}
\def\cgaps#1{}
\def\Cgaps#1{}
\title[Weighted Sobolev Metrics and Almost Local Metrics]
      {Sobolev Metrics on Shape Space, II: \\ Weighted Sobolev Metrics and Almost Local Metrics}
\author{Martin Bauer, Philipp Harms, and Peter~W.~Michor}
\address{
Martin Bauer:
Fakult\"at f\"ur Mathematik, Universit\"at Wien, 
Nordbergstrasse 15, A-1090 Wien, Austria.}
\email{bauer.martin@univie.ac.at}
\address{
Philipp Harms:
EdLabs, Harvard University,
44 Brattle Street, Cambridge, MA 02138, USA.}
\email{pharms@edlabs.harvard.edu}
\address{
Peter W. Michor:
Fakult\"at f\"ur Mathematik, Universit\"at Wien,
Nordbergstrasse 15, A-1090 Wien, Austria.}
\email{peter.michor@univie.ac.at}
\subjclass{Primary 58B20, 58D15, 58E12}
 \keywords{Surface Matching, Sobolev Type Metric, Shape Space, Well-posedness, Geodesic Equation}
\thanks{All authors were supported by
FWF Project 21030. MB was supported by FWF Project P~24625.}
\begin{document}

\maketitle

\begin{abstract}
In continuation of \cite{Michor119} we discuss metrics
of the form
$$
G^P_f(h,k)=\int_M \sum_{i=0}^p\Ph_i\big(\Vol(f)\big)\ \g\big((P_i)_fh,k\big) \vol(f^*\g)
$$
on the space of immersions $\Imm(M,N)$
and on shape space $B_i(M,N)=\Imm(M,N)/\on{Diff}(M)$.
Here $(N,\g)$ is a complete Riemannian manifold, $M$ is a compact manifold,
$f:M\to N$ is an immersion, $h$ and $k$ are tangent vectors to $f$ in the space of immersions,
$f^*\g$ is the induced Riemannian metric on $M$,
$\vol(f^*\g)$ is the induced
volume density on $M$, $\Vol(f)=\int_M\vol(f^*\g)$, $\Ph_i$ are positive real-valued functions, and
$(P_i)_f$ are operators like some power of the Laplacian $\De^{f^*\g}$.
We derive the geodesic equations for these metrics and show that they are sometimes well-posed with the geodesic
exponential mapping a local diffeomorphism.
The new aspect here are the weights $\Ph_i(\Vol(f))$ which we use to construct scale invariant
metrics and order 0 metrics with positive geodesic distance.
We treat several concrete special cases in detail.
\end{abstract}

\section{Introduction}

This paper is a continuation of the article \cite{Michor119} which discussed general Sobolev
metrics on shape space: The special examples of metrics on spaces of shapes in $\mathbb R^n$
studied there are invariant under the motion group, but
not under scalings. The geodesic equation is well posed.

In contrast, the article \cite{Michor118} discusses weighted $L^2$-metrics:
They have easy horizontal bundles, and they can be made scale invariant by a judicious choice of
the weight function. But we do not know that the geodesic equation is well posed.

In this article we discuss in detail several interesting cases of weighted Sobolev metrics
on shape space which are more explicit special cases of the general setup in \cite[section 6]{Michor119}.
In particular we are interested in scale invariant versions. For shapes of planar curves this was
discussed in \cite{Michor107}; detailed studies of concrete examples are in \cite{Michor111} and 
\cite{Michor128}.
Sobolev type metrics on the manifold of all Riemannian metrics were treated in \cite{Michor123}.

\subsection{The shape spaces in this work}
Let $(N,\g)$ be a complete Riemannian manifold, and let $M$ be a compact manifold (the template
shape).
A \emph{shape} is a smoothly embedded surface in $N$ which is diffeomorphic to $M$.
The space of these shapes will be  denoted $B_e=B_e(M,N)$ and viewed as the
quotient (see \cite{Michor102} for more details)
$$ B_e(M,N) = \on{Emb}(M,N)/\on{Diff}(M)$$
of the open subset $\on{Emb}(M,N)\subset C^\infty(M,N)$
of smooth embeddings of $M$ in $N$,
modulo the group of smooth diffeomorphisms of $M$.
It is natural to consider all possible \emph{immersions} as well
as embeddings, and thus introduce the larger
space $B_i=B_i(M,N)$ as the quotient of the space of smooth immersions by the
group of diffeomorphisms of $M$ (which is, however, no longer a manifold,
but an orbifold with finite isotropy groups, see \cite{Michor102}).
\begin{equation*}
\xymatrix{
\on{Emb}(M,N) \ar@{^{(}->}[d] \ar@{->>}[r] &
\on{Emb}(M,N)/\on{Diff}(M) \ar@{^{(}->}[d] \ar@{=}[r]  &
B_e(M,N) \ar@{^{(}->}[d] \\
\Imm(M,N) \ar@{->>}[r] &
\Imm(M,N)/\on{Diff}(M)  \ar@{=}[r]  & B_i(M,N)
}
\end{equation*}
More generally, a shape will be an element of the Cauchy completion (i.e., the metric completion
for the geodesic distance) of $B_i(M,N)$ with
respect to a suitably chosen Riemannian metric. This will allow for corners.
In practice, discretization for numerical algorithms will hide the need to go to the Cauchy
completion.

Since $B_i(M,N)$, as a quotient space or moduli space, is notoriously difficult to work on,
all our computations will be done on the smooth infinite dimensional manifold $\Imm(M,N)$.
In particular we shall use tangent vectors $h$ to $\Imm(M,N)$ with foot point $f\in\Imm(M,N)$ which
are smooth vector fields $h:M\to TN$ along $f$. The tangent bundle $T\Imm(M,N)$ is the smooth
open submanifold of all $h\in C^\infty(M,TN)$ such that $\pi_N\o h$ is in the open subset
$\Imm(M,N)\subset C^\infty(M,N)$.

Any tangent vector $h \in T_f\Imm$ can be decomposed uniquely as $h=Tf.h^\top + h^\bot$, such that
$h^\bot(x)$ is $\g$-orthogonal to $T_xf.T_xM$ at every point $x \in M$.
We call $h^\top$ and $h^\bot$ the tangential and normal parts of $h$, respectively.

\subsection{Sobolev metrics on shape space}\label{intr:sobolev}
As defined in \cite[section 6]{Michor119}, a Sobolev metric
is a $\Diff(M)$-invariant weak Riemannian metric on $\Imm(M,N)$ of the form
$$
G^P_f(h,k)=\int_M \g(P_fh,k) \vol(g) \quad \text{for} \quad f \in \Imm, \quad h,k \in T_f\Imm.
$$
where $g=f^*\g$ is the induced Riemannian metric on $M$, $\vol(g)=\vol(f^*\g)$ is the induced
volume density on $M$, and where
$P$ is a smooth section of the bundle $L(T\Imm;T\Imm)$ over $\Imm$ which is $\Diff(M)$-invariant
such that at every $f \in \Imm$ the operator
$$P_f:T_f\Imm \to T_f\Imm$$
is an elliptic pseudo differential operator of order $2p$ that is symmetric and positive with respect to
the $H^0$-metric on $\Imm$,
$$H^0_f(h,k) = \int_M \g(h,k)\vol(g).$$
(1) If moreover
the smooth \emph{adjoint}
\begin{equation*}
\adj{\nabla P} \in \Ga\big(L^2(T\Imm;T\Imm)\big)
\end{equation*}
of $\nabla P$ exists in the following sense:
\begin{equation*}
\int_M \g\big((\nabla_m P)h,k\big) \vol(g)=\int_M \g\big(m,\adj{\nabla P}(h,k)\big) \vol(g),
\end{equation*}
then the geodesic equation for the Sobolev metric exists, see \cite[6.2--6.5]{Michor119}.

\noindent (2)
In \cite[theorem 6.6]{Michor119} we have proved that the geodesic equation for a Sobolev metric is
well posed in the sense that we have local existence and uniqueness for geodesics in the
$C^\infty$-sense in such a way, that the Riemannian exponential mapping is a local diffeomorphism,
if $P$ satisfies the following further hypothesis:
\newline
$P,\nabla P$ and $\adj{\nabla P}^\bot$ are smooth sections of the bundles
\begin{equation*}\xymatrix{
L(T\Imm;T\Imm) \ar[d] & L^2(T\Imm;T\Imm) \ar[d] & L^2(T\Imm;T\Imm) \ar[d] \\
\Imm & \Imm & \Imm,
}\end{equation*}
respectively.
Viewed locally in trivializations of these bundles,
$$P_f h, \qquad (\nabla P)_f (h,k), \qquad \big(\adj{\nabla P}_f(h,k)\big)^\bot$$
are pseudo-differential operators of order $2p$ in $h,k$ separately.
As mappings in the foot point $f$ they are non-linear, and it is assumed that they are a
composition of operators of the following type:
\newline
\indent\textrm{(a)} Local operators of order $l\le 2p$, i.e., nonlinear differential operators
$$A(f)(x)=A\big(x,\hat \nabla^{l}f(x),\hat \nabla^{l-1}f(x),\dots,\hat \nabla f(x), f(x)\big),$$
\indent \phantom{(a)} where $\hat \nabla$ is a fixed covariant derivative independent of $f$, and
\newline
\indent\textrm{(b)}  Linear pseudo-differential operators of degrees $l_i$,
\newline
such that the total (top) order of the composition is $\le 2p$.

For numerical experiments concerning Sobolev metrics of order one see the articles \cite{Michor119,MBMB2011}.

\subsection{Almost local metrics}
In the paper \cite{Michor118} we studied metrics of the following type on the shape space
$B_i(M,\mathbb R^n)$ (where $\dim(M)=n-1$):
$$
G^\Ph_f(h,k) = \int_M \Ph\big(\Vol(f),\Tr(L^f)\big)\ \g(h,k)\,\vol(f^*\g),\quad f\in\Imm,\ h,k\in T_f\Imm,
$$
where now $\g$ is the Euclidean metric on $\mathbb R^n$, $\vol(f^*\g)$ is the volume density on $M$
induced by the the pullback metric $g=f^*\g$ on $M$, $\Vol(f)=\int_M\vol(f^*\g)$ is the total
$(n-1)$-volume of $f(M)$, $L^f$ is the Weingarten mapping of the immersed hypersurface, $\Tr(L^f)$
is the mean curvature, and where $\Ph:\mathbb R^2\to \mathbb R_{>0}$ is a smooth function.
Note that $\Tr(L^f)$ is a nonlinear differential operator of $f$ and thus local. By calling such a
metric almost local we meant that the only non-local expression in the metric is the total volume
$\Vol(f)$. The investigation in \cite{Michor118} was aimed at curvature computations and numerical
examples. Doing numerics for almost local metrics is facilitated by the fact that the horizontal bundle for
almost local metric is very simple. In the paper \cite{Michor120} this investigation was extended
to metrics of the form
$$ G^\Ph_f(h,k) = \int_{M} \Ph\big(\Tr(L^f),\det(L^f)\big) \ \g(h, k) \on{vol}(f^*\bar{g})$$
where now $\Ph$ is a positive function of the mean curvature and the Gauss curvature.

\subsection{The metrics in this paper: weighted Sobolev metrics}
Here we combine the settings of the papers \cite{Michor119}, \cite{Michor118}, and
\cite{Michor120} in the following way: The ambient space $(N,\g)$ is again a complete Riemannian
manifold, but not necessarily $\R^n$. The metrics are of the form
\begin{equation}\label{1}
G_f(h,k) = \int_{M} \sum_{i=0}^p\Ph_i\big(\Vol(f)\big) \ \g((P_i)_f h, k) \on{vol}(f^*\bar{g}),
\end{equation}
where the
$P_i$ are smooth sections of the bundle $L(T\Imm;T\Imm)$ over $\Imm$ which are $\Diff(M)$-invariant
such that at every $f \in \Imm$ each operator
$$(P_i)_f:T_f\Imm \to T_f\Imm$$
is a pseudo differential operator that is symmetric and positive semidefinite  with respect to
the $H^0$-metric on $\Imm$, where the $\Ph_i$ are positive smooth functions, and where
$\sum_i \Ph_i(\Vol(f)).(P_i)_f$ is elliptic and positive self adjoint.
The operator governing the metric is thus $P_f:=\sum_i\Ph_i(\Vol(f)).(P_i)_f$.
We may again call the corresponding metric $G^P$ an almost local metric if all $(P_i)_f(h)$ are
local operators in $h$ and in $f$, i.e., differential operators according to the linear
(\cite{Peetre59}, \cite{Peetre60}) and non-linear (\cite{Slovak88}) Peetre theorem, and if the only non-local ingredients
are functions of $\Vol(f)$.

In section \ref{sec:weightedsobolev} we discuss general metrics of the form \eqref{1}, deduce the
geodesic equation (\ref{ge_eq_imm}, if the adjoints of $\nabla P_i$ all exist) and remark that it
is well-posed (under the conditions listed in \ref{intr:sobolev}), where we allow
the assumptions (\ref{intr:sobolev}.2) in a more general form:
The linear operator $\vol(f^*\g)\mapsto \int_M\vol(f^*\g)=\Vol(f)$ is not a pseudo-differential
operator in the strict sense since $\de(\xi)$ is never an admissible symbol. But the operator
is very easy to handle and thus  \cite[theorem 6.6]{Michor119} continues to hold.
In \ref{conserv} we discuss the conserved
quantities arising from symmetry groups. Finally we discuss the horizontal bundle and the
horizontal geodesic equation which corresponds to the geodesic equation on shape space $B_i(M,N)$.

Theorem \ref{sec:geod-dist} discusses conditions which imply that geodesic distance on shape space
is positive. We recall the conditions for positivity  given in \cite{Michor119} and add two
conditions for operators of order zero.
This is a nontrivial question, since geodesic distance on shape space vanishes for the
$L^2$-metric (where $P=\on{Id}$) as shown in \cite{Michor98} and \cite{Michor102}, and also on
diffeomorphism groups (see \cite{Michor102}) and on the Virasoro group (see \cite{Michor122}).
We even have vanishing geodesic distance of diffeomorphism groups for Sobolev metrics of order
$0\le p <\frac12$, see \cite{Michor124}.

Section \ref{sec:almostlocal} is devoted to almost local metrics of order 0 in $h$, where the operator
is given by $P_f= \sum_i\Ph_i(\Vol(f))(P_i)_f = \sum_i\Ph_i(\Vol(f)).\Ps_i(f).\on{Id}$
where $\Ps_i(f)$ is a smooth positive function depending smoothly and equivariantly on $f$;
for example, $\Ps(f)= \|\Tr^g(S)\|_g^2$ can be the $g=f^*\g$-norm squared of the vector valued mean
curvature. For these metrics the horizontal bundle at $f$ is just the bundle of vector fields
along $f$ which are $\g$-orthogonal to $f$. In particular, we discuss the case of conformal metrics
where $P_f=\Ph(\Vol(f)).\on{Id}$, and of curvature weighted metrics where
$P=(1+A\|\Tr^g(S)\|^2_g)\on{Id}$.
We derive the geodesic equations for all these metrics. Note that the general well-posedness
theorem \cite[theorem 6.6]{Michor119} is not applicable for these metrics since the order in $h$ is
 not high enough.

Section \ref{sec:scale_inv} is devoted to metrics corresponding to
operators
$$
P_f = \sum_{i=0}^p \Vol(f)^{\frac{2(i-1)}{m}-1} \g((\De^f)^i h, k) \vol(f^*\g),
$$
where $\De^f$ is the Bochner-Laplace operator associated to $f$ and $\g$, acting on vector fields on
$M$ along $f$, see \cite[3.11 and 5.9]{Michor119}.
The powers here are chosen in such a way that for $N=\mathbb R^n$ the metrics are invariant under rescalings.
We derive the geodesic equations on the space of immersions and on shape space. If $p\ge 1$ then
the geodesic equation is well posed.

The final section \ref{sec:variation} is an appendix which recalls the induced covariant derivative
$\nabla^{\g}$ on any bundle over the space $\Imm(M,N)$ for the convenience of the reader. We
differentiate the vector valued mean curvature $\Tr^g(S^f)$ with respect to the immersion $f$ and 
we recall other variational formulas.

\section{Weighted Sobolev type metrics}\label{sec:weightedsobolev}

We want to study metrics that are induced by operators of the form
$$
P_f:=\sum_{i\in I}\Phi_i(\Vol(f))\ (P_i)_f: T_f\Imm(M,N)\mapsto T_f\Imm(M,N).
$$
\begin{ass*}
\emph{
For each $i\in I$ let $\Phi_i: \R^+\mapsto \R^+$ be a smooth function and $P_i$ be a smooth section of the bundle of linear maps $L(T\Imm;T\Imm)$ over $\Imm$.
In addition we assume that $P_f$ is an elliptic pseudo differential operator that
is symmetric and positive  with respect to
the $H^0$-metric on $\Imm$, i.e.:
$$H^0_f(P_f h,k)=H^0_f(h,P_f k)\text{ and }H^0_f(P_f h,h)\geq 0, \text{ for }h,k\in T_f\Imm(M,N).$$}
\end{ass*}

Note that an elliptic symmetric operator is self-adjoint by \cite[theorem 26.2]{Shubin1987}.
Then the operator $P$ induces a  metric on the set of immersions, namely
$$G^P_f(h,k):=\int_M \g(P_fh,k) \vol(g)=\sum_{i\in I}\Phi_i(\Vol)\int_M \g\big((P_i)_fh,k\big) \vol(g)\;,$$
for $f \in \Imm$, and  $h,k \in T_f\Imm$.
The metric $G^P$ is positive definite since $P$ is assumed to be positive with respect to the
$H^0$-metric.

\subsection{Invariance of $P$ under reparametrizations}\label{so:in}

\begin{ass*}
\emph{ It will be assumed that all $P_i$ are invariant under
the action of the reparametrization group $\Diff(M)$ acting on $\Imm(M,N)$, i.e.
\begin{align*}
P_i=(r^{\ph})^* P_i \qquad \text{for all } \ph \in \Diff(M).
\end{align*}
}
\end{ass*}

For any $f \in \Imm$ and $\ph \in \Diff(M)$ this means
$$(P_i)_f = (T_fr^{\ph})\i \o (P_i)_{f \o \ph} \o T_fr^{\ph}.$$
Applied to $h \in T_f\Imm$ this means
$$(P_i)_f(h) \o \ph = (P_i)_{f \o \ph}(h \o \ph).$$
Note that $\Vol$ is invariant under the action of the diffeomorphism group and
therefore all $\Phi_i(\Vol)$ are invariant as well.
This together with the invariance of $P_i$ implies that the induced metric $G^P$ is invariant
under the action of $\Diff(M)$.  Therefore $G^P$ induces a
unique metric on $B_i$ as explained in \cite[section 2]{Michor119}.

\subsection{The adjoint of $\nabla P_i$}\label{so:ad}
Following \cite{Michor119} we introduce the adjoint operator of $\nabla P_i$ to express the metric gradient $H$ which is part of
the geodesic equation. $H_f$ arises from the metric $G_f$ by differentiating it with
respect to its foot point $f \in \Imm$.
Since the metric is defined via the operator $P=\sum_i\Phi_i(\Vol).P_i$, one also needs to differentiate $(P_i)_f$ with respect to its
foot point $f$. As for the metric, this is accomplished by the covariant derivate.
For $P_i \in \Ga\big(L(T\Imm;T\Imm)\big)$ and $m \in T\Imm$ one has
$$\nabla_m P_i \in \Ga\big(L(T\Imm;T\Imm)\big), \qquad \nabla P_i \in \Ga\big(L(T^2\Imm;T\Imm)\big).$$
See~\cite[section 2]{Michor119} for more details.

\begin{ass*}
\emph{
For all $P_i$ there exists a smooth adjoint
$$\adj{\nabla P_i} \in \Ga\big(L^2(T\Imm;T\Imm)\big)$$
of $\nabla P_i$ in the following sense:
\begin{equation*}
\int_M \g\big((\nabla_m P_i)h,k\big) \vol(g)=\int_M \g\big(m,\adj{\nabla P_i}(h,k)\big) \vol(g).
\end{equation*}}
\end{ass*}

The existence of the adjoint needs to be checked in each specific example,
usually by partial integration.

\begin{lem*}[\cite{Michor119}]
If the adjoint of $\nabla P_i$ exists, then its tangential part is determined
by the invariance of $P_i$ with respect to reparametrizations:
\begin{align*}
\adj{\nabla P_i}(h,k)^\top &=\big(\g(\nabla P_ih,k)-\g(\nabla h,P_ik)\big)^\sharp \\
&=\grad^g \g(P_ih,k)-\big(\g(P_ih,\nabla k)+\g(\nabla h,P_ik)\big)^\sharp
\end{align*}
for $f \in \Imm, h,k \in T_f\Imm$.
\end{lem*}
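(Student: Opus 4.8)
The plan is to exploit the reparametrization invariance of $P_i$ by differentiating the identity $(P_i)_f(h)\o\ph = (P_i)_{f\o\ph}(h\o\ph)$ along a curve of diffeomorphisms. Concretely, take a smooth path $\ph_t$ in $\Diff(M)$ with $\ph_0=\id_M$ and $\p_t|_0\ph_t = X$ a vector field on $M$. Feeding $f\o\ph_t$ and $h\o\ph_t$, $k\o\ph_t$ into the invariant $H^0$-pairing, the integral $\int_M \g\big((P_i)_{f\o\ph_t}(h\o\ph_t),k\o\ph_t\big)\vol\big((f\o\ph_t)^*\g\big)$ is independent of $t$ (it equals the $t=0$ value by invariance plus the substitution formula for the pulled-back volume density). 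Differentiating at $t=0$ and using the chain rule $\nabla_{\p_t|_0(f\o\ph_t)} = \nabla_{Tf.X}$ together with $\p_t|_0(h\o\ph_t) = dh.X$ (covariant derivative of $h$ along $M$ in direction $X$, which is what $\nabla h$ denotes here) gives
\begin{equation*}
0 = \int_M \Big( \g\big((\nabla_{Tf.X}P_i)h,k\big) + \g\big(P_i(\nabla_X h),k\big) + \g\big(P_ih,\nabla_X k\big) + \g(P_ih,k)\,\on{div}^g(X)\Big)\vol(g).
\end{equation*}

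Next I would rewrite the first term using the definition of the adjoint: by the defining property of $\adj{\nabla P_i}$ with $m = Tf.X$, we have $\int_M \g\big((\nabla_{Tf.X}P_i)h,k\big)\vol(g) = \int_M \g\big(Tf.X,\adj{\nabla P_i}(h,k)\big)\vol(g)$, and since $Tf.X$ is tangential this only sees the tangential part: it equals $\int_M g\big(X,\adj{\nabla P_i}(h,k)^\top\,{}^\flat\big)\vol(g)$, writing $\flat$ for the $g$-musical isomorphism transporting the $\g$-tangential vector to a vector field on $M$. For the remaining three terms, collect them as $\int_M \big( g\big(X,\,(\g(P_ih,\nabla k)+\g(\nabla h,P_ik))^\flat\big) - \g(P_ih,k)\,\on{div}^g(X)\big)\vol(g)$ — here I interpret $\g(P_ih,\nabla k)$ as the $1$-form $X\mapsto \g(P_ih,\nabla_X k)$. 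Finally, integrate the divergence term by parts: $\int_M \g(P_ih,k)\,\on{div}^g(X)\vol(g) = -\int_M d\big(\g(P_ih,k)\big)(X)\vol(g) = -\int_M g\big(X,\grad^g\g(P_ih,k)\big)\vol(g)$.

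Assembling everything, the relation becomes $\int_M g\big(X, \adj{\nabla P_i}(h,k)^\top\,{}^\flat\big)\vol(g) = \int_M g\big(X,\,\grad^g\g(P_ih,k) - (\g(P_ih,\nabla k)+\g(\nabla h,P_ik))^\sharp\big)\vol(g)$ for every vector field $X$ on $M$. Since this holds for all $X$ and $\vol(g)$ is nondegenerate, the integrands agree pointwise, yielding the second displayed formula for $\adj{\nabla P_i}(h,k)^\top$; the first displayed formula then follows by expanding $\grad^g\g(P_ih,k)$ via the product rule $d\,\g(P_ih,k) = \g(\nabla P_ih,k) + \g(P_ih,\nabla k)$ and cancelling the $\g(P_ih,\nabla k)$ terms. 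The main obstacle is purely bookkeeping: keeping the musical isomorphisms, the distinction between the $\g$-geometry on $N$ and the $g$-geometry on $M$, and the sign in the integration-by-parts of the divergence straight, together with justifying that $\p_t|_0(h\o\ph_t)$ really produces the covariant derivative $\nabla_X h$ that appears in the statement (this uses that $\nabla$ here is the induced connection on the bundle of vector fields along $f$, restricted to differentiation in $M$-directions). There is no analytic difficulty — the argument is a single variation computation — and no need to know the full geodesic equation.
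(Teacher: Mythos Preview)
The paper does not prove this lemma here; it is quoted from \cite{Michor119} without argument. Your approach---differentiating the $\Diff(M)$-invariance of the $H^0$-pairing $\int_M\g(P_ih,k)\vol(g)$ along a curve $\ph_t$ of diffeomorphisms, reading off the $(\nabla_{Tf.X}P_i)$ term via the defining property of the adjoint, and integrating the divergence term by parts---is exactly the natural one and is how the result is obtained in \cite{Michor119}. The argument is correct.

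Two small points to tighten. First, when you pass from $\g\big(P_i(\nabla_X h),k\big)$ to $\g(\nabla_X h,P_ik)$ you are silently using the $H^0$-symmetry of $P_i$; this only holds after integration, so you should say so explicitly rather than writing it as a pointwise identity. Second, in your intermediate ``collect them as'' display there is a stray sign and a $\flat$/$\sharp$ mixup, though your final assembled identity has both correct. Neither affects the validity of the proof.
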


\subsection{The geodesic equation on the manifold of immersions}\label{ge_eq_imm}
As explained in the introduction the geodesic equation can be expressed
in terms of the metric gradients $H$ and $K$ (see \cite[section~4.4]{Michor119} for more details on this topic).
These gradients and thus also the geodesic equation will be computed in this section.

\begin{thm*}
If the adjoint $\adj{\nabla P_i}$ exists for all $i\in I$, then also $H$ and $K$ exist and the geodesic equation is given by:
\begin{align*}
&\nabla_{\p_t} f_t= \frac12 H_f(f_t,f_t) - K_f(f_t,f_t)\\ &=
\frac12 P\i\bigg(\sum_i\Phi_i(\Vol)\Big(\adj{\nabla P_i}(f_t,f_t)^\bot
-
\frac{\Phi_i'(\Vol)}{\Phi_i(\Vol)}\int_M \g(P_if_t,f_t)\vol(g)\Tr^g(S)\\&\qquad\qquad-2Tf.\g(P_if_t,\nabla f_t)^\sharp -\g(P_if_t,f_t).\Tr^g(S)\Big)\bigg)
\\&\quad-
P\i\Big(\sum_i \Phi_i(\Vol)(\nabla_{f_t} P_i)f_t-\sum_i\int_M g(f_t,\Tr^g(S))\vol(g))\Phi_i'(\Vol)P_if_t\\&\qquad\qquad +\Tr^g\big(\g(\nabla f_t,Tf)\big).Pf_t\Big)\;.
\end{align*}
\end{thm*}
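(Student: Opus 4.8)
The plan is to derive the geodesic equation by computing the two metric gradients $H_f$ and $K_f$ separately, following the variational framework of \cite[section 4.4]{Michor119}. Recall that for a weak Riemannian metric $G_f(h,k)$ on $\Imm$, the geodesic equation in the form $\nabla_{\partial_t}f_t = \frac12 H_f(f_t,f_t) - K_f(f_t,f_t)$ is obtained from the first variation of the energy, where $K_f(h,h)$ is defined by $G_f(K_f(h,h),m) = \big(\partial_t|_0 G_{f(t)}(h,h)\big)$ for a variation with $\partial_t|_0 f = m$ and $h$ extended suitably, and $H_f(h,h)$ is the $G$-gradient of $f\mapsto G_f(h,h)$ with $h$ held as a fixed vector field along the variation. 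Since here $G^P_f(h,k) = \sum_i \Phi_i(\Vol(f))\int_M\g((P_i)_f h,k)\vol(g)$, the product structure means each differentiation splits by the Leibniz rule into a term hitting $\Phi_i(\Vol(f))$, a term hitting $(P_i)_f$, a term hitting the integrand vector fields, and a term hitting the volume density $\vol(g)$.

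First I would record the variational formulas needed as inputs: the derivative of the volume density, $\nabla_m \vol(g) = \Tr^g\big(\g(\nabla m, Tf)\big)\vol(g)$ (equivalently $-\g(m,\Tr^g(S))\vol(g) + \mathrm{div}$-term after splitting $m$ into tangential and normal parts); the derivative of the total volume, $\nabla_m \Vol(f) = \int_M \Tr^g(\g(\nabla m,Tf))\vol(g) = -\int_M \g(m,\Tr^g(S))\vol(g)$; and hence $\nabla_m\big(\Phi_i(\Vol(f))\big) = \Phi_i'(\Vol(f))\cdot\big(-\int_M\g(m,\Tr^g(S))\vol(g)\big)$. These are exactly the formulas recalled in the appendix (section \ref{sec:variation}) for $\Tr^g(S^f)$, and they account for the $\Phi_i'(\Vol)$ terms and the $\Tr^g(S)$ terms appearing in the statement. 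For the $(P_i)_f$ piece I would invoke the assumed adjoint: $\int_M\g((\nabla_m P_i)h,k)\vol(g) = \int_M\g(m,\adj{\nabla P_i}(h,k))\vol(g)$, which is what makes $H$ exist, while for $K$ the relevant term is the undifferentiated $\sum_i\Phi_i(\Vol)(\nabla_{f_t}P_i)f_t$ coming from transporting $h=f_t$ along the curve.

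Next I would assemble $K_f(f_t,f_t)$: differentiating $G_{f(s)}(f_t,f_t)$ along the geodesic (so $m=f_t$, and $f_t$ as a vector field along the curve contributes $\nabla_{\partial_t}f_t$ which moves to the other side, plus the $\nabla_{f_t}P_i$ term), collect the three contributions — the $\Phi_i'$-term $\sum_i\big(-\int_M\g(f_t,\Tr^g(S))\vol(g)\big)\Phi_i'(\Vol)\int_M\g((P_i)_f f_t,\cdot)\vol(g)$ wait, more precisely the term that pairs against $m$ is $\Phi_i'(\Vol)\big(-\int\g(m,\Tr^g S)\vol\big)\int\g(P_if_t,f_t)\vol$, the $\nabla P_i$-term $\sum_i\Phi_i(\Vol)\int\g((\nabla_m P_i)f_t,f_t)\vol$, and the volume term $\sum_i\Phi_i(\Vol)\int\g(P_if_t,f_t)\Tr^g(\g(\nabla m,Tf))\vol$ — then identify $K_f$ by reading off the $G^P_f(\cdot,m) = \int\g(P\,\cdot,m)\vol$ pairing, i.e. apply $P\i$. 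Similarly for $H_f(f_t,f_t)$: here $h=f_t$ is frozen as a vector field (covariantly constant in the relevant direction), so differentiating $f\mapsto G^P_f(f_t,f_t)$ produces the $\adj{\nabla P_i}$-term, the $\Phi_i'$-term, and the volume-density term; splitting $\adj{\nabla P_i}(f_t,f_t)$ into normal part (which stays) and tangential part (which by the Lemma equals $\grad^g\g(P_if_t,f_t) - (\g(P_if_t,\nabla f_t)+\g(\nabla f_t,P_if_t))^\sharp$, and the gradient piece integrates by parts against $\vol(g)$ to combine with the volume-density term) yields the $-2Tf.\g(P_if_t,\nabla f_t)^\sharp$ and $-\g(P_if_t,f_t)\Tr^g(S)$ terms. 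The main obstacle is the bookkeeping: carefully tracking which $\Tr^g(S)$ terms come from the $\Vol$-weight versus the $\vol(g)$-density, ensuring the tangential parts of $\adj{\nabla P_i}$ cancel correctly against the divergence terms produced by integrating the volume variation by parts (so that only the stated explicit terms survive), and confirming that the final division by $P=\sum_i\Phi_i(\Vol)(P_i)_f$ is legitimate — but this last point is precisely where the ellipticity and positive self-adjointness in the Assumption is used, guaranteeing $P\i$ exists as a pseudo-differential operator of order $-2p$. Once each $G^P_f(\,\cdot\,,m)$-pairing is collected and matched term-by-term against the right-hand side, the formula follows; the structure is entirely parallel to \cite[6.2--6.5]{Michor119}, with the sole new ingredient being the Leibniz terms from $\Phi_i(\Vol(f))$.
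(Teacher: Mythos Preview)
Your approach is essentially identical to the paper's: compute $(\nabla_m G^P)(h,k)$ by the Leibniz rule (contributions from $\Phi_i(\Vol)$, from $(P_i)_f$, and from $\vol(g)$), read off $K$ directly and $H$ via the assumed adjoint, then simplify $H$ using the tangential-part Lemma so that the $\grad^g\g(P_ih,k)$ term cancels against the divergence term produced when the volume-density variation $\Tr^g(\g(\nabla m,Tf))$ is integrated by parts. One small caution on exposition: your stated defining relations for $H$ and $K$ are swapped relative to the paper's convention $(\nabla_m G)(h,k) = G\big(K(h,m),k\big) = G\big(m,H(h,k)\big)$, so $K$ is identified by pairing against $k$ (not $m$) and $H$ by pairing against $m$; since all three slots collapse to $f_t$ in the geodesic equation this does not affect the final formula, but it is worth getting the roles straight before writing out the general $K_f(h,m)$ and $H_f(h,k)$.
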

\begin{remark*}
Note that we can apply   \cite[theorem~6.6]{Michor119}. Therefore the geodesic equation is
well-posed assuming that the operator $P$ satisfies  all conditions listed in
\cite[section 6.6]{Michor119}, which are reviewed in {\rm (\ref{intr:sobolev}.2)}.
\end{remark*}

\begin{remark*}
As shown in \cite{Michor119} the geodesic equation can be equivalently written in terms of the momentum $p=G(f_t,.)$ as:
\begin{align*}
p &= G(f_t, \cdot) \\
\nabla_{\p_t} p &= \frac12 G_f\big( H(f_t,f_t),\cdot\big),
\end{align*}
For the metric $G^P$ the momentum takes the form
$$p=Pf_t\otimes\vol(g)=\Phi(\Vol)Pf_t\otimes\vol(g): \R \to T^*\Imm$$
since all other parts of the metric (namely the integral and $\g$)
are constant and can be neglected (cf. \cite[section 6.5]{Michor119}).
Then the geodesic equation reads as:
\begin{align*}
p&=\sum_i\Phi_i(\Vol)P_if_t\otimes\vol(g)
\\
\nabla_{\p_t}p &= \frac12\bigg(\sum_i\Phi_i(\Vol)\Big(\adj{\nabla P_i}(f_t,f_t)^\bot
\\&\qquad\qquad\qquad-
\frac{\Phi_i'(\Vol)}{\Phi_i(\Vol)}\int_M \g(P_if_t,f_t)\vol(g)\Tr^g(S)\\&\qquad\qquad\qquad-2Tf.\g(P_if_t,\nabla f_t)^\sharp -\g(P_if_t,f_t).\Tr^g(S)\Big)\bigg)\otimes\vol(g)
\end{align*}
\end{remark*}

\begin{proof}[Proof of Theorem \ref{ge_eq_imm}]
The governing equations for the metric gradients $H,K \in \Ga\big(L^2(T\Imm;T\Imm)\big)$ are
$$(\nabla_m G)(h,k)=G\big(K(h,m),k\big)=G\big(m,H(h,k)\big)\;,$$
where $h,k,m$ are vector fields on $\Imm$ (cf. \cite[section 4.3]{Michor119}).
We calculate:
\begin{equation}\label{Kgradient}
\begin{aligned}
&(\nabla_m G^P)(h,k)=
D_{(f,m)} \int_M \g(Ph,k) \vol(g) \\&\qquad\qquad
- \int_M \g\big(P(\nabla_m h),k\big) \vol(g)
- \int_M \g(Ph,\nabla_m k) \vol(g)
\\&\qquad=
\int_M D_{(f,m)}\g(Ph,k) \vol(g)
+ \int_M \g(Ph,k) D_{(f,m)}\vol(g)\\&\qquad\qquad
- \int_M \g\big(P(\nabla_m h),k\big) \vol(g)
- \int_M \g(Ph,\nabla_m k) \vol(g)
\\&\qquad=
\int_M \g\big(\nabla_m(Ph),k\big) \vol(g)
+ \int_M \g(Ph,\nabla_m k) \vol(g)\\&\qquad\qquad
+ \int_M \g(Ph,k) D_{(f,m)}\vol(g)\\&\qquad\qquad
- \int_M \g\big(P(\nabla_m h),k\big) \vol(g)
- \int_M \g(Ph,\nabla_m k) \vol(g)
\\&\qquad=
\int_M \g\big((\nabla_m P)h,k\big) \vol(g)
+ \int_M \g(Ph,k) D_{(f,m)}\vol(g)\;.
\end{aligned}
\end{equation}
Since $P$ is an elliptic, self-adjoint, and positive operator, it is invertible on the space of smooth
sections. (See the beginning of the proof of \cite[theorem 6.6]{Michor119} for a detailed argument.)
Plugging in the variational formulas from \ref{ap:var} for the volume form and the volume,
one immediately gets the $K$-gradient.
\begin{multline*}
K_f(h,m)=P\i\Big(\sum_i \Phi_i(\Vol)(\nabla_m P_i)h-\sum_i\int_M g(m,\Tr^g(S))\vol(g))\Phi_i'(\Vol)P_ih\\ +\Tr^g\big(\g(\nabla m,Tf)\big).Ph\Big)\;.
\end{multline*}
This formula is equal to the formula presented in \cite[section 6.3]{Michor119}.

To calculate the $H$-gradient, one rewrites equation~\eqref{Kgradient}
using the definition of the adjoint of $P$ (c.f. \cite[section 6.3]{Michor119}):
\begin{align*}
(\nabla_m G^P)(h,k)&=\int_M \g\big(\sum_i\nabla_m (\Phi_i(\Vol)P_i)h,k\big) \vol(g)
+ \int_M \g(Ph,k) D_{(f,m)}\vol(g)\\&=
\sum_i\Phi_i(\Vol)\int_M \g\big(m,\adj{\nabla P_i}(h,k)\big) \vol(g)\\&\qquad
+\sum_i\Phi_i'(\Vol)D_{(f,m)}\Vol \int_M \g(P_ih,k) \vol(g)\\&\qquad
+ \sum_i\Phi_i(\Vol)\int_M \g(P_ih,k) D_{(f,m)}\vol(g).
\end{align*}
Now the second and third summand are treated further using  the variational formulas for the volume
density and volume from \ref{ap:var}. For the second term this yields:
\begin{multline*}
\sum_i\Phi_i'(\Vol)D_{(f,m)}\Vol \int_M \g(P_ih,k)\vol(g)=\\=
-\sum_i\Phi_i'(\Vol)\int_M g(m,\Tr^g(S))\vol(g)) \int_M \g(P_ih,k) \vol(g).
\end{multline*}
For the third term we calculate:
\begin{align*}
&\sum_i\Phi_i(\Vol)\int_M \g(P_ih,k) D_{(f,m)}\vol(g)=\\&\qquad=
\sum_i\Phi_i(\Vol)\int_M \g(P_ih,k) \Tr^g\big(\g(\nabla m,Tf)\big) \vol(g) \\
&\qquad=
\sum_i\Phi_i(\Vol)\int_M \g(P_ih,k) \Tr^g\big(\nabla\g(m,Tf)-\g(m,\nabla Tf)\big) \vol(g) \\
&\qquad=
\sum_i\Phi_i(\Vol)\int_M \g(P_ih,k) \Big(-\nabla^*\g(m,Tf)-\g\big(m,\Tr^g(S)\big)\Big) \vol(g) \\
&\qquad=
-\sum_i\Phi_i(\Vol)\int_M g^0_1\big(\nabla\g(P_ih,k),\g(m,Tf)\big)\vol(g)\\
&\qquad\quad\,-\sum_i\Phi_i(\Vol)\int_M \g(P_ih,k) \g\big(m,\Tr^g(S)\big) \vol(g) \\
&\qquad=
\sum_i\Phi_i(\Vol)\int_M \g\big(m,-Tf.\grad^g\g(P_ih,k)-\g(P_ih,k) \Tr^g(S)\big) \vol(g)
\end{align*}
Collecting terms one gets that
\begin{align*}
&G_f^P(H_f(h,k),m)=(\nabla_m G^P)(h,k)\\&\quad=
\sum_i\bigg(\Phi_i(\Vol)\int_M \g\Big(m,\adj{\nabla P_i}(h,k)-
\frac{\Phi_i'(\Vol)}{\Phi_i(\Vol)}\int_M \g(P_ih,k)\vol(g)\Tr^g(S)\\&\quad\qquad\qquad\qquad\qquad\qquad\qquad\quad
-Tf.\grad^g\g(P_ih,k)-\g(P_ih,k) \Tr^g(S)\Big) \vol(g)\bigg)\;.
\end{align*}
Thus the $H$-gradient is given by
\begin{align*}
&H_f(h,k)=P\i\bigg(\sum_i\Phi_i(\Vol)\Big(\adj{\nabla P_i}(h,k)\\&
-
\frac{\Phi_i'(\Vol)}{\Phi_i(\Vol)}\int_M \g(P_ih,k)\vol(g)\Tr^g(S)
-Tf.\grad^g\g(P_ih,k) -\g(P_ih,k).\Tr^g(S)\Big)\bigg)\;.
\end{align*}
The highest order term $\grad^g\g(P_ih,k)$ cancels out when taking into
account the formula for the tangential part of the adjoint from section~\ref{so:ad}:
\begin{align*}
H_f(h,k)
&=P\i\bigg(\sum_i\Phi_i(\Vol)\Big(\adj{\nabla P_i}(h,k)^\bot
%\hspace{-.15cm}
\\&\qquad
-\frac{\Phi_i'(\Vol)}{\Phi_i(\Vol)}\int_M \g(P_ih,k)\vol(g)\Tr^g(S)
\\&\qquad
-Tf.\big(\g(P_ih,\nabla k)+\g(\nabla h,P_ik)\big)^\sharp -\g(P_ih,k).\Tr^g(S)\Big)\bigg)
\;.
\qedhere
\end{align*}
\end{proof}

\subsection{Conserved Quantities}\label{conserv}
The metric $G$ is invariant under the action of the reparametrization group on $M$.
According to \cite{Michor118} the momentum mapping for this group
action is constant along any geodesic in $\on{Imm}(M,N)$.:
$$\boxed
{\begin{aligned}
\forall X\in\X(M): G( Tf.X,f_t ) && \text{rep. mom.}\\
\text{or }G(f_t^\top,\cdot)   \in\Ga(T^*M\otimes_M\on{vol}(M))
	&& \text{rep. mom.}\\
\end{aligned}}$$
If $(N,\g)$ has a non-trivial isometry group, this gives rise to further conserved quantities
with values in the Lie-algebra of Killing vector fields.
For example, take a flat ambient space $N=\R^n$. Then the almost local metrics are in addition invariant under the action of the
Euclidean motion group $\mathbb R^n \rtimes \on{SO}(n)$. This yields the following conserved quantities:
$$\boxed
{\begin{aligned}
\int_M \sum_{i=1}^p \Phi_i\big(\on{Vol(f)}\big) P_if_t \on{vol}(g) && \text{lin. mom.}\\
\forall X\in \mathfrak{so}(n): \int_M \sum_{i=1}^p \Phi_i\big(\on{Vol(f)}\big)  \g( P_if_t,Xf )
\on{vol}(g) && \text{ang. mom.} \\
\text{or }\int_M \sum_{i=1}^p \Phi_i\big(\on{Vol(f)}\big) (f\wedge P_i f_t)
\on{vol}(g) \in {\textstyle\bigwedge^2}\mathbb R^n\cong \mathfrak s\mathfrak o(n)^* && \text{ang. mom.}
\end{aligned}}$$
%------------------------------------------
\subsection{The horizontal bundle and the geodesic equation on $B_i$}\label{ge_eq_bi}
%------------------------------------------
According to \cite{Michor119} the geodesic equation on $B_i$ is equivalent to the horizontal geodesic equation on $\Imm$ assuming that in every equivalence class of curves there exists a horizontal curve. Therefore we need to study the horizontal bundle of the metric
$G^P$. Multiplication with an everywhere positive function does not change the notion of horizontality.
Therefore we can repeat the analysis of \cite[sections 6.8-6.9]{Michor119} to obtain:
\begin{lem*}\label{hor_bun}
The decomposition of $h \in T_f\Imm$ into its vertical and horizontal components
is given by
\begin{align*}
h^{\text{ver}} &= (P_f^\top )\i\big((P_fh)^\top\big),
\\
h^{\text{hor}} &= h - Tf.h^{\text{ver}} =  h - Tf.(P_f^\top )\i\big((P_fh)^\top\big).
\end{align*}
Furthermore for any smooth path $f$ in $\Imm(M,N)$ there exists a
smooth path $\ph$ in $\on{Diff}(M)$ with $\ph(0,\;.\;)=\on{Id}_M$
depending smoothly on $f$ such that
the path $\tilde f$ given by $\tilde f(t,x)=f(t,\ph(t,x))$ is horizontal:
$$G^P(\p_t\tilde f,T\tilde f.TM \big) =0.$$
Thus any path in shape space can be lifted to a horizontal path of immersions.
\end{lem*}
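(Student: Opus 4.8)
The plan is to follow \cite[sections 6.8--6.9]{Michor119} almost verbatim, the point being that the horizontal bundle of $G^P$ depends only on the operator $P_f=\sum_i\Phi_i(\Vol(f))(P_i)_f$, which by the standing assumptions is an elliptic, $H^0$-symmetric (hence self-adjoint) operator for which $G^P$ is positive definite, and which is $\Diff(M)$-invariant because $\Vol$ is; the extra scalar weights $\Phi_i(\Vol(f))$ do not affect the notion of horizontality. So I would first record that the vertical subbundle at $f$ is the tangent space to the $\Diff(M)$-orbit, $(T_f\Imm)^{\text{ver}}=\{Tf.\xi:\xi\in\X(M)\}$, and that $h^{\text{hor}}$ is by definition the $G^P_f$-orthogonal complement of the vertical part inside $h$. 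Writing $h=Tf.h^{\text{ver}}+h^{\text{hor}}$ and using $\g(w,Tf.\xi)=g(w^\top,\xi)$ together with the definition $P_f^\top\eta:=(P_f(Tf.\eta))^\top$, the orthogonality condition $G^P_f(h^{\text{hor}},Tf.\xi)=0$ for all $\xi\in\X(M)$ becomes $\int_M g\big((P_fh)^\top-P_f^\top h^{\text{ver}},\xi\big)\vol(g)=0$ for all $\xi$, i.e. $(P_fh)^\top=P_f^\top h^{\text{ver}}$.

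To turn this into the stated formula I must invert $P_f^\top:\X(M)\to\X(M)$. I would check that $P_f^\top$ is again elliptic, since its principal symbol at each cotangent vector is the compression of the (positive definite) principal symbol of $P_f$ to $T_xf.T_xM\subset (f^*TN)_x$, which is again positive definite; that it is $H^0(M)$-symmetric and positive, because $\int_M g(P_f^\top\eta,\xi)\vol(g)=\int_M\g(P_f(Tf.\eta),Tf.\xi)\vol(g)$ transfers these properties directly from $P_f$; and that it has trivial kernel, since $P_f^\top\eta=0$ forces $G^P_f(Tf.\eta,Tf.\eta)=0$, whence $\eta=0$ as $G^P$ is positive definite and $Tf$ is injective. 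An elliptic symmetric operator with trivial kernel is bijective on $C^\infty$, and — as in the argument recalled at the beginning of the proof of \cite[theorem~6.6]{Michor119} — its inverse depends smoothly on the foot point $f$; this gives $h^{\text{ver}}=(P_f^\top)\i\big((P_fh)^\top\big)$, $h^{\text{hor}}=h-Tf.h^{\text{ver}}$, and smoothness of the splitting in $(f,h)$.

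For the second statement I would reduce horizontality of $\tilde f(t,x)=f(t,\ph(t,x))$ to an ODE on $\Diff(M)$. Setting $\xi(t):=\p_t\ph(t)\o\ph(t)\i\in\X(M)$, a direct computation gives $\big(\p_t\tilde f(t)\big)\o\ph(t)\i=\p_tf(t)+Tf(t).\xi(t)$, and since $G^P$ is $\Diff(M)$-invariant, $\tilde f$ is horizontal if and only if $\p_tf(t)+Tf(t).\xi(t)$ is horizontal at $f(t)$ for every $t$. By the projection formula of the first part this is equivalent to $\xi(t)=-\big(\p_tf(t)\big)^{\text{ver}}=-(P_{f(t)}^\top)\i\big((P_{f(t)}\p_tf(t))^\top\big)$, so $\ph$ is precisely the flow of the time-dependent vector field $-\big(\p_tf(t)\big)^{\text{ver}}$ on $M$ with $\ph(0,\cdot)=\Id_M$. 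This flow exists for all $t$ in the parameter interval of $f$ because $M$ is compact, it depends smoothly on $f$ (by smoothness of $f\mapsto\big(\p_tf\big)^{\text{ver}}$ from the first part together with smooth dependence of solutions of ODEs on parameters), and $\tilde f=f\o\ph$ is again a smooth path of immersions satisfying $G^P(\p_t\tilde f,T\tilde f.TM)=0$.

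The only step where genuine analysis is hidden is the invertibility of the elliptic operator $P_f^\top$ on $\X(M)$ together with the smooth dependence of $(P_f^\top)\i$ on the foot point $f$; once this is granted (exactly as in \cite{Michor119}, whose treatment needs no change since $P_f$ here is still elliptic, self-adjoint and positive definite), everything else is bookkeeping.
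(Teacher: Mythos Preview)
Your proposal is correct and matches the paper's own treatment, which simply observes that the scalar weights $\Phi_i(\Vol(f))$ do not change the notion of horizontality and then defers entirely to \cite[sections~6.8--6.9]{Michor119} for the proof. What you have written is essentially a careful reconstruction of that argument (ellipticity, symmetry and injectivity of $P_f^\top$, hence invertibility with smooth dependence on $f$; then the time-dependent flow equation $\p_t\ph=-(\p_tf)^{\text{ver}}\circ\ph$ on the compact $M$), so there is no divergence in method.
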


See \cite[sections 6.8-6.9]{Michor119} for a proof of the lemma.
Note that the above lemma  establishes a one-to-one correspondence between horizontal
curves in $\Imm$ and curves in shape space. Thus the geodesic equation on shape space is given by:
\begin{thm*}\label{ge_eq_ge_sh}
The geodesic equation for the $G^P$-metric on shape space is equivalent to the set of equations
for a path of immersions $f$:
\begin{align*}
p &= P f_t \otimes \vol(g), \qquad Pf_t = (Pf_t)^\bot, \\
(\nabla_{\p_t}p)^\hor&= \frac12\bigg(\sum_i\Phi_i(\Vol)\Big(\adj{\nabla P_i}(f_t,f_t)^\bot
\\&\qquad\qquad\qquad-
\frac{\Phi_i'(\Vol)}{\Phi_i(\Vol)}\int_M \g(P_if_t,f_t)\vol(g)\Tr^g(S)\\&\qquad\qquad\qquad\qquad\qquad\qquad\qquad\qquad
 -\g(P_if_t,f_t).\Tr^g(S)\Big)\bigg)\otimes\vol(g)
\end{align*}
\end{thm*}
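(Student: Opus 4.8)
The plan is to derive the horizontal geodesic equation on $\Imm(M,N)$ from the geodesic equation already established in Theorem~\ref{ge_eq_imm} (in its momentum form stated in the last Remark), and then invoke the Lemma on the horizontal bundle to transfer it to $B_i$. First I would recall that, by the horizontal-bundle lemma, every curve in shape space lifts to a horizontal curve of immersions, and horizontality of a path $f$ with respect to $G^P$ means $G^P(\p_t f, Tf.TM)=0$. Since multiplication by the positive function $\Phi(\Vol)$ does not affect horizontality, the condition reduces exactly to $(Pf_t)^\top = 0$, i.e. $Pf_t = (Pf_t)^\bot$; this explains the first line of the claimed system. The momentum $p = Pf_t\otimes\vol(g)$ is unchanged from Theorem~\ref{ge_eq_imm}.

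Next I would start from the momentum form of the geodesic equation,
$$\nabla_{\p_t}p = \tfrac12 G^P_f\big(H_f(f_t,f_t),\cdot\big),$$
whose right-hand side, by the computation of the $H$-gradient in the proof of Theorem~\ref{ge_eq_imm}, equals
$$\tfrac12\Big(\sum_i\Phi_i(\Vol)\big(\adj{\nabla P_i}(f_t,f_t)^\bot - \tfrac{\Phi_i'(\Vol)}{\Phi_i(\Vol)}\textstyle\int_M\g(P_if_t,f_t)\vol(g)\,\Tr^g(S) - 2Tf.\g(P_if_t,\nabla f_t)^\sharp - \g(P_if_t,f_t)\Tr^g(S)\big)\Big)\otimes\vol(g).$$
To pass to shape space I would take the horizontal projection of both sides. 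On the left this gives $(\nabla_{\p_t}p)^\hor$. On the right, the key observation is that the term $\sum_i\Phi_i(\Vol)\,2\,Tf.\g(P_if_t,\nabla f_t)^\sharp\otimes\vol(g)$ is tangential: it is of the form $Tf.(\text{something})\otimes\vol(g)$, and hence lies in the vertical part of $T^*\Imm$ under the $G^P$-duality (or, more precisely, it is annihilated by the horizontal projection once one accounts for the momentum pairing). Therefore it drops out, leaving precisely the three terms displayed in the statement. The remaining terms $\adj{\nabla P_i}(f_t,f_t)^\bot$ and $\g(P_if_t,f_t)\Tr^g(S)$ are already normal (orthogonal to $Tf.TM$), so they are unaffected by the horizontal projection; this is why they survive unchanged.

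The one subtlety — and the step I expect to require the most care — is making precise what $(\cdot)^\hor$ means on the momentum side, since $p$ lives in $T^*\Imm\otimes\vol$, not in $T\Imm$, and the horizontal projection there is the one dual to the splitting $T_f\Imm = \Hor_f\oplus\vert_f$ with respect to $G^P$; one must check that under this duality a term of the shape $Tf.w\otimes\vol(g)$ indeed projects to zero, using that $\Hor_f$ consists exactly of the $h$ with $(P_fh)^\top=0$, equivalently that $G^P_f(Tf.w, h) = \int_M \g(P_f h, Tf.w)\vol(g)$ vanishes for horizontal $h$ by symmetry of $P_f$ and the defining condition. Once this orthogonality bookkeeping is set up, everything else is a direct specialization of Theorem~\ref{ge_eq_imm} together with the horizontal-bundle lemma, and the two references \cite[sections 6.8--6.9]{Michor119} supply the analogous argument in the unweighted case, which carries over verbatim since the weights are positive scalars.
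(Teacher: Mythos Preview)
Your approach is exactly the paper's: the proof there is the single sentence ``direct consequence of Theorem~\ref{ge_eq_imm} and of Lemma~\ref{hor_bun}.'' You correctly unpack this by starting from the momentum form in the Remark after Theorem~\ref{ge_eq_imm}, imposing horizontality $(Pf_t)^\top=0$ from the Lemma, and observing that the only non-normal piece on the right, $-2\,Tf.\g(P_if_t,\nabla f_t)^\sharp$, is killed by the horizontal projection.

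One remark on the subtlety you flag. The computation you sketch, $G^P_f(Tf.w,h)=\int_M\g(P_fh,Tf.w)\vol(g)=0$ for horizontal $h$, shows that the \emph{tangent vector} $Tf.w$ is vertical (which is tautological), not that the \emph{covector} $Tf.w\otimes\vol(g)$ under the $H^0$-pairing lies in $(\Hor_f)^\circ$; these differ by a factor of $P$. The cleaner route is to observe that on the covector side $(\cdot)^{\hor}$ means projection to the annihilator of $\vert_f=\{Tf.X\}$, and under the $H^0$-identification this is exactly the normal component of the vector part, so $(Tf.w)^\bot=0$ is immediate. Equivalently --- and this is the conceptual reason it works --- the reparametrization momentum vanishes identically along a horizontal geodesic, so the tangential component of the momentum equation carries no new information and may simply be discarded.
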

The proof is a direct consequence of theorem~\ref{ge_eq_imm} and of lemma~\ref{hor_bun}.

\begin{thm}[Geodesic distance]\label{sec:geod-dist}
The  metric $G^P$ induces non-vanishing geodesic distance on $B_e$ if at least one of the following conditions is satisfied
for $C_1,C_2,C_3,A>0$ and for all $h \in T\Imm$:
\begin{enumerate}
\item $\norm{h}_{G^P} \geq C_1 \norm{h}_{H^1} =
C_1 \sqrt{\int_M \g\big( (1+\Delta) h,h \big) \vol(g),}$
\item $\norm{h}_{G^P} \geq C_2 \Vol \norm{h}_{H^0} =
C_2 \sqrt{\Vol(f)\int_M \g\big(h,h \big) \vol(g)},$
\item $\norm{h}_{G^P} \geq C_3  \norm{h}_{G^A} =
C_3 \sqrt{\int_M(1+A\|\Tr^g(S)\|^2) \g\big(h,h \big) \vol(g)}$
\end{enumerate}
\end{thm}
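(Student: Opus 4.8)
The plan is to show that each of the three conditions implies that the $G^P$-geodesic distance dominates a distance already known to be non-vanishing on $B_e$, since if $\|h\|_{G^P} \ge C\|h\|_{G'}$ pointwise for every tangent vector $h$, then the same inequality holds for the lengths of every path, hence for the infimum of lengths, hence $\dist^{G^P} \ge C\cdot\dist^{G'}$; so positivity of $\dist^{G'}$ forces positivity of $\dist^{G^P}$. Thus the whole theorem reduces to exhibiting, for each of the three comparison metrics appearing on the right-hand side, a reference in the literature (or a short argument) establishing that \emph{it} has positive geodesic distance on $B_e$.

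For condition (1), the comparison metric is the $H^1$-metric $\int_M \g((1+\De)h,h)\vol(g)$; this is a Sobolev metric of order $p=1$ in the sense of \ref{intr:sobolev}, and positivity of its geodesic distance on $B_e$ is exactly one of the results recalled from \cite{Michor119} — so here I would simply cite that. For condition (2), the comparison metric is $\Vol(f)\cdot H^0_f$, which is (up to the constant-in-$f$ nature of nothing — it does depend on $f$ through $\Vol$) the scale-invariant $L^2$-type almost local metric studied in \cite{Michor118}; I would invoke the positivity result proved there for the volume-weighted $L^2$ metric. For condition (3), the comparison metric is the curvature-weighted metric $G^A_f(h,k) = \int_M (1+A\|\Tr^g(S)\|^2)\g(h,k)\vol(g)$, an almost local metric of the type treated in \cite{Michor118} (with weight function $\Ph(\cdot) = 1+A\|\cdot\|^2$ applied to the mean curvature); again the positivity of its geodesic distance should be a result there, or follow from the same Lipschitz-type estimate (bounding the change in some geometric quantity along a path by its $G^A$-length), which is the standard technique used in \cite{Michor98,Michor102} to establish positive distance.

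The only genuine content beyond citation is condition (3), and the main obstacle is verifying that the curvature-weighted $G^A$ really does have positive geodesic distance — this is subtle because the unweighted $L^2$ metric famously has vanishing distance. The idea there is the classical one: along a $G^A$-path from $f_0$ to $f_1$ one controls the evolution of a suitable scalar invariant — a natural candidate being $\int_M \|\Tr^g(S)\|\,\vol(g)$, the total mean curvature, or a localized version thereof — by differentiating it in $t$, applying Cauchy–Schwarz, and absorbing the curvature factor $1+A\|\Tr^g(S)\|^2$ to get a bound of the form $|\frac{d}{dt}(\text{invariant})| \le \text{const}\cdot\|f_t\|_{G^A}$. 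If two shapes differ in this invariant they must be at positive $G^A$-distance, and one checks that shapes in the same fiber but collapsing in the unweighted sense are kept apart because the curvature concentrates (the weight $A\|\Tr^g(S)\|^2$ blows up precisely where the $L^2$ pancake-move tries to shrink). I would either carry out this estimate or, preferably, cite it from \cite{Michor118} where almost local metrics with curvature-dependent weights were analyzed.

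Finally I would remark that the inequalities in the three conditions are to be read pointwise over $h\in T_f\Imm$ for every $f$, which is why they descend to geodesic distance, and that for $B_e$ (rather than $B_i$) one uses that $\Emb(M,N)\subset\Imm(M,N)$ is open and the quotient $B_e$ is an honest manifold, so the distance comparison is immediate; the statement is phrased for $B_e$ to avoid the orbifold subtleties of $B_i$, but the same argument gives the result on $B_i$ as well.
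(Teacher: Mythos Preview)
Your reduction strategy---comparing $G^P$ to a weaker metric known to have positive geodesic distance---is exactly what the paper does, and for condition~(1) your citation of \cite{Michor119} matches. For condition~(2) the paper does not cite but gives a three-line direct argument: by Cauchy--Schwarz, $\sqrt{\Vol\int_M\g(f_t,f_t)\vol(g)}\ge\int_M\|f_t\|_{\g}\vol(g)$, so the $G^P$-length of a horizontal path dominates the $(\dim M+1)$-volume swept out by $f$, which is a genuine distance on $B_e$.

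For condition~(3) your proposal has a real gap. The invariant you suggest, $\int_M\|\Tr^g(S)\|\vol(g)$, will not be Lipschitz in $G^A$: its time derivative contains $\p_t\Tr^g(S)$, which by the variational formula in Section~\ref{ap:var} involves $\Delta(f_t^\bot)$, and $G^A$ controls no derivatives of $f_t$. More importantly, a Lipschitz scalar invariant alone only separates shapes with \emph{different} values of that invariant; it does not give positive distance between arbitrary distinct shapes. The paper's argument for~(3) is instead a two-step combination: first bound $\|f_t\|_{G^A}\ge\|f_t\|_{H^0}\ge\Vol^{-1/2}\int_M\|f_t\|_{\g}\vol(g)$, obtaining
\[
\Length_{G^P}(f)\ \ge\ \frac{C_3}{\max_t\sqrt{\Vol(f(t,\cdot))}}\cdot(\text{area swept out});
\]
second, show that $\sqrt{\Vol}$ is Lipschitz in the $G^A$-length. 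This is where the curvature weight is actually used: since $\p_t\Vol=-\int_M\g(f_t,\Tr^g(S))\vol(g)$, Cauchy--Schwarz lets the factor $\|\Tr^g(S)\|^2$ be absorbed into $1+A\|\Tr^g(S)\|^2$. Thus the curvature weight is exploited to control \emph{volume}, not a curvature integral, and positivity ultimately comes from area-swept-out as in~(2). Citing \cite{Michor118} would not suffice here either, since that paper treats hypersurfaces in $\R^n$ whereas the present statement allows arbitrary codimension in a general ambient space $(N,\g)$.
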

\begin{proof}
For the proof using condition (1) we refer to \cite[section 7]{Michor119}. If condition (2) holds
we calculate for a horizontal path $f$:
\begin{align*}
\Length_{G^P}^{\Imm}(f)&\geq C_2\int_0^1 \sqrt{\Vol(f)\int_M \g\big(f_t,f_t \big) \vol(g)}dt
\geq C_2\int_0^1 \int_M 1.\sqrt{\g\big(f_t,f_t \big)} \vol(g)dt\\
&=C_2\int_{[0,1]\times M}\vol(f(.,.)^*\g)=C_2 \ (\text{area swept out by $f$}) \;.
\end{align*}
Since area swept out is a positive distance on $B_e$ this shows the statement for condition (2).
For condition (3) we calculate:
\begin{align*}
\Length_{G^P}^{\Imm}(f)&\geq C_3\int_0^1 \sqrt{\int_M(1+A\|\Tr^g(S)\|^2) \g\big(f_t,f_t \big) \vol(g)}dt\\&\geq C_3 \int_0^1 \sqrt{\int_M\g\big(f_t,f_t \big) \vol(g)}dt
\geq C_3\int_0^1 \frac{1}{\sqrt{\Vol}}\int_M 1.\sqrt{\g\big(f_t,f_t \big)} \vol(g)dt\\
&\geq C_3\frac{1}{\underset{t}{\max} \sqrt{\Vol(f(t,.))}} \ (\text{area swept out by $f$})
\end{align*}
The statement for condition (3) will be proven by showing that $\sqrt{\Vol(f(t,.))}$ is Lipschitz continuous.
Using the formula from section~\ref{ap:var} for the variation of the volume we get
\begin{align*}
\p_t \on{Vol}(f) &=
-\int_M \g(f_t,\Tr^g(S)) \on{vol}(g) \leq
\left| \int_M \g(f_t,\Tr^g(S)) \on{vol}(g) \right| \\&\leq
\Big(\int_M 1^2 \on{vol}(g)\Big)^{\frac12} \Big(\int_M \g(f_t,\Tr^g(S))^2 \on{vol}(g)\Big)^{\frac12} \\&\leq
\sqrt{\on{Vol}(f)}\Big(\int_M \norm{f_t}^2_{\g}\norm{\Tr^g(S)}^2_{\g} \on{vol}(g)\Big)^{\frac12} \\&\leq
\sqrt{\on{Vol}(f)} \Big(\int_M \frac{\Phi}{C_2} \norm{f_t}^2_{\g} \on{vol}(g)\Big)^{\frac12} \leq
\frac{1}{\sqrt{C_2}}\sqrt{\on{Vol}(f)} \sqrt{G_f(f_t,f_t)}.
\end{align*}
Thus
\begin{align*}
\p_t \sqrt{\on{Vol}(f)}=\frac{\p_t \on{Vol}(f)}{2 \sqrt{\on{Vol}(f)}}\leq
\frac{1}{2\sqrt{C_2}} \sqrt{G_f(f_t,f_t)}.
\end{align*}
By integration we get
\begin{align*}
\sqrt{\on{Vol}(f_1)}-\sqrt{\on{Vol}(f_0)} &=
\int_0^1 \p_t \sqrt{\on{Vol}(f)}dt \\&\leq
\int_0^1 \frac{1}{2\sqrt{C_2}} \sqrt{G_f(f_t,f_t)} =
\frac{1}{2\sqrt{C_2}}\Length^{G}_{\on{Imm}}(f).\qedhere
\end{align*}
\end{proof}

\section{Almost local metrics of order zero}\label{sec:almostlocal}
In this chapter we will study metrics that are induced by operators of order zero, i.e.
$$P=\sum_i \Phi(\Vol(f)).(P_f)_i(h)=\sum_i \Phi(\Vol(f)).\Psi_i(f).h,$$
where $\Psi: \Imm(M,N)\mapsto \R_{>0}$ is any smooth,
positive function depending on the immersion $f$, that is equivariant with respect to the action of the
diffeomorphism group $\Diff(M)$.
Note that $\Psi$ might also be a differential operator acting on the immersion $f$.
So order zero means order zero in $h,k$, but possibly higher order in the foot point $f$.
The most prominent examples of such operators, and also the examples we will consider in this section
are
$$P(f):=\Phi(\Vol)\Psi(\|\Tr^g(S)\|^2).$$
Here $\Tr^g(S)$ denotes the vector valued mean curvature.

For the case of planar curves, metrics of this form have been introduced in \cite{Michor107}. Recently
they have  been generalized to hypersurfaces in $n$-space, see  \cite{Michor118}.
The most general case of surfaces in a possibly curved ambient space has been studied in the PhD thesis of Martin Bauer
\cite{Bauer2010}.

The great advantage of this class of metrics is the particularly simple form of the horizontal bundle, namely:
\begin{lem*}[Horizontal bundle]\label{lem:ho}
For an almost local metric  of order zero, the horizontal bundle at an immersion $f$ equals the set of
sections of the normal bundle along $f$.
\end{lem*}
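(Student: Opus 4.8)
The plan is to read the horizontal bundle off directly from the general formula in Lemma~\ref{hor_bun}, after observing that for an order-zero almost local metric the governing operator $P_f$ is just pointwise multiplication by a positive function on $M$. Concretely, for $P=\sum_i\Phi(\Vol(f)).\Psi_i(f).\Id$ the operator $P_f\colon T_f\Imm\to T_f\Imm$ has the form $P_fh=\la_f\cdot h$ with $\la_f:=\sum_i\Phi(\Vol(f))\Psi_i(f)$; this is a smooth function $M\to\R_{>0}$, and since $M$ is compact and $\la_f$ is everywhere positive, $\la_f^{-1}$ is again a smooth positive function, i.e.\ a legitimate multiplier. The first step is therefore just to record this structural fact together with the observation that multiplication by $\la_f$ is $C^\infty(M)$-linear, hence commutes both with the bundle map $Tf\colon TM\to f^*TN$ and with the pointwise $\g$-orthogonal splitting $h=Tf.h^\top+h^\bot$.

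From this the claim is almost immediate. Since $\la_f$ commutes with the splitting we get $(P_fh)^\top=\la_f\,h^\top$, and the auxiliary operator $P_f^\top$ occurring in Lemma~\ref{hor_bun} is nothing but multiplication by $\la_f$ on tangential fields, so $(P_f^\top)\i$ is multiplication by $\la_f^{-1}$. Plugging this into the formulas of Lemma~\ref{hor_bun} yields $h^{\text{ver}}=(P_f^\top)\i\big((P_fh)^\top\big)=\la_f^{-1}(\la_f h^\top)=h^\top$, and consequently $h^{\text{hor}}=h-Tf.h^{\text{ver}}=h-Tf.h^\top=h^\bot$. Thus the horizontal space at $f$ is exactly $\{h^\bot:h\in T_f\Imm\}$, the space of smooth sections of the normal bundle along $f$, which is the assertion. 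As a cross-check one can also argue directly from the definition of horizontality: $h$ is horizontal iff $\int_M\la_f\,\g(h,Tf.X)\vol(g)=\int_M\la_f\,g(h^\top,X)\vol(g)=0$ for all $X\in\X(M)$, and substituting $X=\la_f^{-1}h^\top$ forces $\int_M\|h^\top\|_g^2\vol(g)=0$, hence $h^\top=0$; the converse inclusion holds trivially since normal vectors are pointwise $\g$-orthogonal to $Tf.TM$.

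I do not expect any real obstacle here: the statement is a clean specialization of the already-established Lemma~\ref{hor_bun}, and the only point needing (minor) care is that $\la_f^{-1}$ is a genuine smooth multiplier, which follows from positivity of all $\Phi$'s and $\Psi_i$'s together with compactness of $M$. Everything else is formal manipulation with the pointwise orthogonal decomposition and the fact that scalar multiplication by a function on $M$ commutes with $Tf$ and with taking tangential and normal parts.
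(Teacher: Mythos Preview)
Your proposal is correct. The paper's own proof is exactly your cross-check: it argues directly from the definition that $h$ is horizontal iff $G^P_f(h,Tf.X)=\int_M\la_f\,\g(h,Tf.X)\,\vol(g)=0$ for all $X\in\X(M)$, and since $\la_f>0$ this is the pointwise condition $\g(h(x),T_xf.X_x)=0$, i.e.\ $h\in\Ga(\Nor(f))$.

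Your primary route via the general decomposition formula of Lemma~\ref{hor_bun} is a valid but more roundabout argument. It has the mild advantage of making explicit that $h^{\text{ver}}=h^\top$ and $h^{\text{hor}}=h^\bot$ in one stroke, but it invokes machinery (invertibility of $P_f^\top$ on tangential fields) that is unnecessary here: once $P_f$ is scalar multiplication by a positive function, the $G^P$-orthogonality condition to vertical vectors collapses immediately to pointwise $\g$-orthogonality, which is the paper's one-line observation.
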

\begin{proof}
By definition, a tangent vector $h$ to $f \in \on{Imm}(M,N)$ is horizontal if
and only if it is $G$-perpendicular to
the $\on{Diff}(M)$-orbits. This is the case if and only if $\g( h(x), T_x f .X_x ) =
0$ at every point $x \in M$.
\end{proof}

This observation is of particular interest both for numerical and theoretical reasons.
The simple form of the horizontal bundle allows to solve
the boundary value problem for geodesics on shape space numerically by minimizing the horizontal energy
(c.f. \cite{Michor118}).
And it allows for the computation of the sectional curvature, as done in \cite{Michor107}
for the case of planar curves and in \cite{Michor118} for the case of hypersurfaces in $n$-space.
The main drawback of this class of metrics is that it is unclear if the geodesic equation is well posed.
(The operator $P$ does not satisfy the conditions of \cite[theorem 6.6]{Michor119}.)

In the following we want to study two examples of almost local metrics in more detail.

\subsection{Conformal metrics}
The class of conformal metrics  correspond to  metrics $G^P$ with
$$
P=\Phi(\Vol).\Id\;.
$$
For the case of planar curves these metrics have been treated in \cite{YezziMennucci2004},
\cite{YezziMennucci2004a}, \cite{YezziMennucci2005}, \cite{Shah2008}
and for the case of  of hypersurfaces in $n$-space they have been studied in \cite{Michor118}.
In the case of planar curves \cite{Shah2008} provides very interesting estimates on geodesic distance induced by metrics with
$\Ph(\Vol)=\Vol$ and $e^{\Vol}$.

According to theorem~\ref{ge_eq_ge_sh} we can simply read off the geodesic equation both on the manifold of immersions and on shape space.
It is given by:
\begin{thm*}\label{ge_eq_conformal}
The geodesic equation on the manifold of immersions for the class of conformal metrics is given by:
\begin{align*}
p&=\Phi(\Vol)f_t\otimes\vol(g)
\\
\nabla_{\p_t}p &= -\frac12\bigg(
\Phi'(\Vol)\int_M \g(f_t,f_t)\vol(g)\Tr^g(S)\\&\qquad\qquad+2\Phi(\Vol)Tf.\g(f_t,\nabla f_t)^\sharp +\Phi(\Vol)\g(f_t,f_t).\Tr^g(S)\Big)\bigg)\otimes\vol(g).
\end{align*}
If one passes to the quotient space $B_i(M,N)$ the geodesic equation reduces to:
\begin{align*}
p &= \Phi(\Vol)f_t \otimes \vol(g), \qquad f_t = f_t^\bot\in\Nor(f), \\
(\nabla_{\p_t}p)^\hor &= -\frac12\Big(
\Phi'(\Vol)\int_M \g(f_t,f_t)\vol(g)\Tr^g(S)\\&\qquad\qquad\qquad\qquad\qquad\qquad
+\Phi(\Vol).\g(f_t,f_t).\Tr^g(S)\Big) \otimes \vol(g).
\end{align*}
\end{thm*}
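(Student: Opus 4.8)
The plan is to read both geodesic equations off the general results of Section~\ref{sec:weightedsobolev}. We are in the special case of a single summand: take the index set to be $\{0\}$, weight $\Phi_0=\Phi$, and operator $(P_0)_f=\Id$ for every $f\in\Imm(M,N)$, so that the metric operator is $P_f=\Phi(\Vol(f)).\Id$, which is manifestly elliptic, self-adjoint and positive with respect to the $H^0$-metric, and the reparametrization invariance is obvious. Since the identity operator is covariantly constant on $\Imm(M,N)$, we have $\nabla_m P_0=0$ for all $m$, hence $\nabla P_0=0$, and therefore the adjoint $\adj{\nabla P_0}$ trivially exists and vanishes. Thus the hypotheses of Theorem~\ref{ge_eq_imm} and of Theorem~\ref{ge_eq_ge_sh} are satisfied.

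For the equation on the manifold of immersions I would substitute $(P_i)_f f_t=f_t$, $\adj{\nabla P_i}(f_t,f_t)^\bot=0$ and $\Phi_i=\Phi$ into the momentum form of the geodesic equation given in the second Remark following Theorem~\ref{ge_eq_imm}. The adjoint term drops out, the momentum becomes $p=\Phi(\Vol)f_t\otimes\vol(g)$, and the three surviving terms on the right-hand side reduce to $-\tfrac{\Phi'(\Vol)}{\Phi(\Vol)}\int_M\g(f_t,f_t)\vol(g)\,\Tr^g(S)$, $-2\,Tf.\g(f_t,\nabla f_t)^\sharp$ and $-\g(f_t,f_t).\Tr^g(S)$; multiplying the bracket by $\tfrac12\Phi(\Vol)$ and tensoring with $\vol(g)$ gives exactly the stated formula. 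For the equation on $B_i(M,N)$ I would specialize Theorem~\ref{ge_eq_ge_sh} in the same way: the purely tangential summand $Tf.\g(f_t,\nabla f_t)^\sharp$ is already absent there, since it is vertical and is annihilated by the horizontal projection $(\nabla_{\p_t}p)^\hor$, and the constraint $Pf_t=(Pf_t)^\bot$ becomes $f_t=f_t^\bot\in\Nor(f)$ because $\Phi(\Vol)>0$ — equivalently, by Lemma~\ref{lem:ho} the horizontal vectors are precisely the normal ones.

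There is essentially no obstacle: the mathematical content resides entirely in Theorems~\ref{ge_eq_imm} and~\ref{ge_eq_ge_sh}, and the present statement is a bookkeeping specialization. The only points deserving an explicit line are (i) that $\nabla P_0=0$, which kills all adjoint contributions, and (ii) that the tangential term of the $\Imm$-equation drops out of the $B_i$-equation under the horizontal projection; both are immediate. I would also add, as already remarked after Lemma~\ref{lem:ho}, that the well-posedness theorem \cite[theorem~6.6]{Michor119} does not apply here because $P$ has order zero in $h$, so no claim about solvability of the geodesic equation is being made.
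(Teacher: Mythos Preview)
Your proposal is correct and follows precisely the paper's approach: the paper itself states just before the theorem that ``According to theorem~\ref{ge_eq_ge_sh} we can simply read off the geodesic equation both on the manifold of immersions and on shape space,'' and offers no further argument. Your write-up is in fact more detailed than the paper's, spelling out explicitly why $\nabla P_0=0$ kills the adjoint term and why the horizontality constraint becomes $f_t=f_t^\bot$.
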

\begin{remark*}
Note that the geodesic equation for conformal metrics does not change when passing from a flat to a curved  ambient space.
\end{remark*}

\subsection{Curvature weighted metrics: The $G^A$-metric.}

In our setting the curvature weighted metric $G^A$ corresponds to the operator
$$P=(1+A\|\Tr^g(S)\|^2)\;.$$
Note that $P$ is a differential operator of order two applied to the foot point $f$,
but an operator of order zero in $h$ and $k$.
Metrics weighted by curvature have been studied in  \cite{Michor98}, \cite{Michor107},
\cite{Michor118}, \cite{Michor120}.

According to section~\ref{ge_eq_ge_sh} we need to calculate the adjoint of $P$ to obtain
the  geodesic equation.
\begin{lem*}
For the operator $P=1+A\|\Tr^g(S)\|^2$ the adjoint is given by:
\begin{align*}
&\adj{\nabla P_i}(h,k)=
4A.\Tr\Big(g\i Sg\i\g\big(S,\Tr^g(S)\big)\g(h,k)\Big)\\&\qquad\qquad
-2A\Big(\Delta\Big(\Tr^g(S).\g(h,k)\Big)\Big)^{\bot}
-2A\g(h,k)\Tr^g\Big(R^{\g}\big(Tf,\Tr^g(S)\big)Tf\Big)^\bot\\&\qquad\qquad+ATf.\g(h,k).\on{grad}^g\norm{\Tr^g(S)}^2_{\g}
\end{align*}
\end{lem*}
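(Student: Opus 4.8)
The plan is to compute the adjoint $\adj{\nabla P}$ directly from its defining relation
$$\int_M \g\big((\nabla_m P)h,k\big)\vol(g)=\int_M \g\big(m,\adj{\nabla P}(h,k)\big)\vol(g),$$
where $P_f=(1+A\|\Tr^g(S^f)\|^2_{\g})\cdot\Id$. Since $P$ acts as a scalar, we have $(\nabla_m P)h = A\cdot D_{(f,m)}\big(\|\Tr^g(S)\|^2_{\g}\big)\cdot h$, so the whole computation reduces to differentiating the scalar function $f\mapsto\|\Tr^g(S^f)\|^2_{\g}$ in the direction $m$ and then moving $m$ out of the resulting expression by partial integration on $M$. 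Thus the first step is to write $D_{(f,m)}\|\Tr^g(S)\|^2_{\g} = 2\g\big(\nabla_m(\Tr^g S),\Tr^g S\big) + (D_{(f,m)}\g)(\Tr^g S,\Tr^g S)$; the last term vanishes because $\nabla^{\g}$ is metric, so only $\g\big(\nabla_m(\Tr^g S),\Tr^g S\big)$ remains.

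The second step is to invoke the appendix (section \ref{sec:variation}, referenced as \ref{ap:var}), which records the first variation of the vector-valued mean curvature $\Tr^g(S^f)$ with respect to the immersion. I expect this formula to have the schematic shape
$$\nabla_m \big(\Tr^g(S)\big) = \Delta^{\perp}(m^{\bot}) + (\text{curvature term }R^{\g}(Tf,m^{\bot})Tf) + (\text{Weingarten/trace term involving }S) + Tf.(\dots),$$
together with a tangential correction coming from $m^{\top}$. Plugging this into $\int_M 2A\,\g\big(\nabla_m(\Tr^g S),\Tr^g S\big)\,\g(h,k)\vol(g)$ gives an integral in which $m$ appears under a Laplacian, under the curvature tensor, and algebraically paired with $S$ and $\Tr^g S$. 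The third step is to integrate by parts the Laplacian term: $\int_M \g\big(\Delta^{\perp}m^{\bot},\Tr^g S\big)\g(h,k)\vol(g) = \int_M \g\big(m^{\bot},\Delta(\Tr^g(S)\g(h,k))\big)\vol(g)$ modulo the usual discrepancy between $\Delta^{\perp}$ and the full Bochner Laplacian, which itself produces a curvature term $\g(h,k)\Tr^g\!\big(R^{\g}(Tf,\Tr^g S)Tf\big)$; this is where the factors $-2A$ in front of the $\Delta$-term and the curvature term come from. The remaining algebraic terms, after using the symmetry of $S$ and the identity $\Tr^g(S)\g(\cdot,\cdot)$-manipulations, collapse into the $4A\,\Tr\big(g\i S g\i \g(S,\Tr^g S)\big)\g(h,k)$ contribution. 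Finally, the tangential piece $\adj{\nabla P}(h,k)^{\top}$ must be $ATf.\g(h,k)\grad^g\|\Tr^g S\|^2_{\g}$ — and here I would cross-check against the general formula for $\adj{\nabla P}(h,k)^{\top}$ in section \ref{so:ad}, which for $P=\psi\cdot\Id$ with $\psi$ scalar gives exactly $(\psi\,\g(h,k))\!\cdot$-gradient minus the symmetric derivative terms, reproducing $ATf.\g(h,k)\grad^g\|\Tr^g S\|^2$ once the $\nabla h,\nabla k$ terms are seen to cancel.

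The main obstacle is the bookkeeping in step two and three: getting the first variation of $\Tr^g(S^f)$ exactly right (including all curvature terms of $(N,\g)$ and the correct handling of the tangential component $m^{\top}$, which contributes through the Lie-derivative-type terms $\grad^g\g(m,\dots)$ that must ultimately assemble into the stated tangential part), and then keeping careful track of which curvature terms arise from the variation of $S$ versus which arise from commuting $\Delta^{\perp}$ past the normal projection during integration by parts. A secondary subtlety is that $\Delta$ here acts on the $N$-valued (rather than purely normal) object $\Tr^g(S)\g(h,k)$, so the $(\cdot)^{\bot}$ appearing on the $\Delta$-term in the statement is essential and must be inserted at the right moment — namely only after the integration by parts, since $m^{\bot}$ is normal and only the normal part of $\Delta(\Tr^g(S)\g(h,k))$ pairs with it. Once the variational formula is in hand, the rest is a routine, if lengthy, reorganization of terms, and I would present it as such rather than expanding every contraction.
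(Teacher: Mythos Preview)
Your approach is essentially the paper's: reduce to the variation of $\|\Tr^g(S)\|^2$, invoke the appendix formula for $D_{(f,m)}\Tr^g(S)$, integrate the Laplacian term by parts, and read off the tangential part (your cancellation of the $\nabla h,\nabla k$ terms via section~\ref{so:ad} is exactly right).

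One correction to your expectations that will simplify the bookkeeping you flag as the main obstacle: the variation formula in the appendix already gives $-\Delta(m^\bot)$ with the \emph{full} Bochner Laplacian, together with a separate curvature term $\Tr^g\big(R^{\g}(m^\bot,Tf)Tf\big)$ --- not $\Delta^\perp$. Hence the integration by parts $\int_M\g(\Delta m^\bot,\Tr^g(S))\g(h,k)\vol(g)=\int_M\g\big(m^\bot,\Delta(\Tr^g(S)\g(h,k))\big)\vol(g)$ is clean, and the curvature contribution in the final answer comes entirely from that explicit $R^{\g}$ term in the variation formula (after one use of the Riemann symmetries to swap $m^\bot$ and $\Tr^g(S)$), not from any $\Delta^\perp$-versus-$\Delta$ discrepancy. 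With that in hand the computation is exactly the routine reorganization you describe.
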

\begin{proof}
Using the variational formula for the mean curvature (see \ref{ap:var}) we calculate:

\begin{align*}
&\int_M \g\big(m,\adj{\nabla P_i}(h,k)\big) \vol(g)=\int_M \g\big((\nabla_m P_i)h,k\big) \vol(g)\\&\qquad=
A\int_M \big(D_{f,m}\|\Tr^g(S)\|^2\big) \g\big(h,k\big) \vol(g)\\&\qquad
=2A\int_M \g\big(\Tr^g(S),D_{f,m}\Tr^g(S)\big) \g\big(h,k\big) \vol(g)\\
% 2A\int_M \g\Big(\Tr^g(S),\Tr^g\big(2\g(m,g\i.S.g\i).S\big)-\Delta(m)\Big)\g\big(h,k\big) \vol(g)\\&\qquad\qquad\qquad
% 2A\int_M \g\Big(\Tr^g(S),\Tr^g(R^{\g}(m^\bot,Tf)Tf)+\L_{m^{\top}}\Tr^g(S)\Big)\g\big(h,k\big) \vol(g)
&\qquad=
2A\int_M\g\Big(2\Tr\big(\g(m^{\bot},g\i.S.g\i).S\big),\Tr^g(S)\Big).\g(h,k) \on{vol}(g)\\
&\qquad\qquad
-2A\int_M(\g\otimes g^0_0)\big(\Delta(m^{\bot}),\Tr^g(S)\big).\g(h,k) \on{vol}(g)\\
&\qquad\qquad+2A\int_M \g\Big(\Tr^g(R^{\g}(m^\bot,Tf)Tf),\Tr^g(S)\Big)\g(h,k)\vol(g)\\
&\qquad\qquad
+2A\int_M \frac12 d\norm{\Tr^g(S)}^2_{\g}(m^\top).\g(h,k)\vol(g)\\&\qquad=
4A\int_M\Tr\Big(\g(m^{\bot},g\i.S.g\i).\g\big(S,\Tr^g(S)\big)\Big)\g(h,k)\vol(g)\\&\qquad\qquad-
2A\int_M(\g\otimes g^0_0)\Big(\Delta(m^{\bot}),\Tr^g(S)\Big)\g(h,k)\vol(g)\\
&\qquad\qquad+2A\int_M \Tr^g\Big(\g\big(R^{\g}(m^\bot,Tf)Tf,\Tr^g(S)\big)\Big)\g(h,k)\vol(g)\\
&\qquad\qquad
+A \int_M g\Big(\on{grad}^g\norm{\Tr^g(S)}^2_{\g}, m^\top \Big).\g(h,k)\vol(g)\\&\qquad=
4A\int_M\g\bigg(m^{\bot},\Tr\Big(g\i.S.g\i.\g\big(S,\Tr^g(S)\big).\g(h,k)\Big)\bigg)\vol(g)\\&\qquad\qquad-
2A\int_M(\g\otimes g^0_0)\Big(m^{\bot},\Delta\big(\Tr^g(S).\g(h,k)\big)\Big)\vol(g)\\
&\qquad\qquad-2\int_M A\Tr^g\Big(\g\big(R^{\g}(Tf,\Tr^g(S))Tf,m^\bot\big)\Big)\g(h,k)\vol(g)\\
&\qquad\qquad
+A\int_M  \g\Big(Tf.\on{grad}^g\norm{\Tr^g(S)}^2_{\g}, Tf.m^\top \Big).\g(h,k)\vol(g)\\&\qquad=
4A\int_M\g\bigg(m^{\bot},\Tr\Big(g\i.S.g\i.\g\big(S,\Tr^g(S)\big).\g(h,k)\Big)\bigg)\vol(g)\\&\qquad\qquad-
2A\int_M\g\Big(m^{\bot},\Delta\big(\Tr^g(S).\g(h,k)\big)\Big)\vol(g)\\
&\qquad\qquad-2\int_M \g\Big(\g(h,k)A\Tr^g\big(R^{\g}(Tf,\Tr^g(S))Tf\big),m^\bot\Big)\vol(g)\\
&\qquad\qquad
+A\int_M \g\Big(m,Tf.\g(h,k).\on{grad}^g\norm{\Tr^g(S)}^2_{\g}\Big)\vol(g)\qedhere
\end{align*}
\end{proof}
Using the above lemma and theorem~\ref{ge_eq_ge_sh} yields the geodesic equation for the $G^A$-metrics both on the manifold of immersions and
on shape space:
\begin{thm*}\label{ge_eq_ge_sh_GA}
The geodesic equation for the $G^A$-metric on $\Imm(M,N)$ is given by
\begin{align*}
p&=(1+A\|\Tr^g(S)\|^2) f_t \otimes \vol(g)
\\
\nabla_{\p_t}p &= \frac12\bigg(2A.\Tr\Big(g\i Sg\i\g\big(S,\Tr^g(S)\big)\g(h,k)\Big)
-A\Big(\Delta\Big(\Tr^g(S).\g(h,k)\Big)\Big)^{\bot}\\&\qquad
-A\g(h,k)\Tr^g\Big(R^{\g}\big(Tf,\Tr^g(S)\big)Tf\Big)^\bot\\&\qquad
-(1+A\|\Tr^g(S)\|^2)\g(f_t,f_t).\Tr^g(S)
\\&\qquad-2(1+A\|\Tr^g(S)\|^2)Tf.\g(f_t,\nabla f_t)^\sharp \bigg)\otimes\vol(g)
\end{align*}
The geodesic equation for the $G^A$-metric on shape space is equivalent to the set of equations
for a path of immersions $f$:
\begin{align*}
p &= (1+A\|\Tr^g(S)\|^2) f_t \otimes \vol(g), \qquad f_t = f_t^\bot\in\Nor(f), \\
(\nabla_{\p_t}p)^\hor&=
\bigg(
2A.\Tr\Big(g\i Sg\i\g\big(S,\Tr^g(S)\big)\g(h,k)\Big)
-A\Big(\Delta\Big(\Tr^g(S).\g(h,k)\Big)\Big)^{\bot}\\&\qquad
-A\g(h,k)\Tr^g\Big(R^{\g}\big(Tf,\Tr^g(S)\big)Tf\Big)^\bot
\\&\qquad
 -\frac12(1+A\|\Tr^g(S)\|^2)\;\g(f_t,f_t),\Tr^g(S)\Big)\bigg)\otimes\vol(g)
\end{align*}
\end{thm*}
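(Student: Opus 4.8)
The plan is to read off both displayed equations directly from the two general geodesic equations already established, specialised to a single order-zero operator with trivial volume weight. The $G^A$-metric is the instance of \eqref{1} with a one-element index set, $\Phi_0\equiv 1$, and $(P_0)_f=(1+A\|\Tr^g(S)\|^2).\Id$. Since $\Phi_0'\equiv 0$, every summand carrying a factor $\Phi_i'(\Vol)$ in Theorem~\ref{ge_eq_imm} and in Theorem~\ref{ge_eq_ge_sh} drops out. The hypothesis that makes Theorem~\ref{ge_eq_imm} applicable --- existence of the adjoint $\adj{\nabla P}$ --- is exactly what the preceding lemma supplies, so both theorems apply without further preparation.

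First I would substitute $h=k=f_t$ into the formula of the preceding lemma and take the normal part $\adj{\nabla P}(f_t,f_t)^\bot$. Of its four summands, the second and third already carry a $(\cdot)^\bot$; the first, $4A\,\Tr\big(g\i Sg\i\g(S,\Tr^g(S))\g(f_t,f_t)\big)$, is built solely from the normal-bundle valued tensors $S$ and $\Tr^g(S)$ and hence already lies in $\Nor(f)$, so it is unaffected by the projection; and the fourth summand, $A\,Tf.\g(f_t,f_t).\grad^g\|\Tr^g(S)\|^2_{\g}$, is tangential and therefore vanishes under $(\cdot)^\bot$. Inserting this into the momentum form of the geodesic equation in Theorem~\ref{ge_eq_imm}, retaining the remaining universal terms $-2Tf.\g(Pf_t,\nabla f_t)^\sharp$ and $-\g(Pf_t,f_t).\Tr^g(S)$, writing $p=(1+A\|\Tr^g(S)\|^2)f_t\otimes\vol(g)$, and distributing the overall factor $\frac12$, yields the asserted equation on $\Imm(M,N)$.

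For the statement on shape space I would invoke Theorem~\ref{ge_eq_ge_sh}: by Lemma~\ref{lem:ho} the horizontal bundle of an almost local metric of order zero is the normal bundle, so horizontal curves of immersions correspond bijectively to curves in $B_i(M,N)$, and along such a curve $f_t=f_t^\bot\in\Nor(f)$. Passing from $\nabla_{\p_t}p$ to its horizontal projection $(\nabla_{\p_t}p)^\hor$ then merely deletes the tangential term $-2Tf.\g(Pf_t,\nabla f_t)^\sharp$, leaving the normal part of the adjoint together with $-\g(Pf_t,f_t).\Tr^g(S)$; reading this off gives the displayed shape-space equation.

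I do not expect a genuine analytic obstacle: all the substance sits in the adjoint computation of the preceding lemma, and what remains is bookkeeping. The one point needing care is tracking which summands are normal and which tangential, so that the projections $(\cdot)^\bot$ and $(\cdot)^\hor$ act on the right pieces --- in particular confirming that the curvature-trace term $\Tr\big(g\i Sg\i\g(S,\Tr^g(S))\g(f_t,f_t)\big)$ needs no projection --- together with verifying that the factor $\frac12$ is distributed consistently over the curvature terms and the zero-order terms in both the $\Imm(M,N)$-level and the $B_i(M,N)$-level formulas.
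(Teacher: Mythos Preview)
Your proposal is correct and matches the paper's own argument, which consists of the single sentence ``Using the above lemma and theorem~\ref{ge_eq_ge_sh} yields the geodesic equation\ldots''. You supply more detail than the paper does --- the identification of which summands of $\adj{\nabla P}(f_t,f_t)$ are normal and which are tangential, and the bookkeeping with the factor $\tfrac12$ --- but the structure is identical: specialise Theorems~\ref{ge_eq_imm} and~\ref{ge_eq_ge_sh} to $\Phi_0\equiv1$, $P_0=(1+A\|\Tr^g(S)\|^2)\Id$, and insert the adjoint from the preceding lemma.
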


\section{Scale invariant Sobolev metrics}\label{sec:scale_inv}
In this section we will study scale invariant Sobolev type metrics of order $p$. This class of metrics is induced by an operator of the form
$$
P=\sum_{i=0}^{p} \Vol^{-\frac{2(i-1)}{m}-1}\Delta^i=
\sum_{i=0}^p\Phi_i(\Vol)P_i,
$$
with $$\Phi_i(\Vol)= \Vol^{-\frac{2(i-1)}{m}-1}\quad \text{ and }\quad P_i=\Delta^i.$$
Recall that $m$ denotes the dimension of the base space, i.e. $\dim(M)=m$.
\begin{remark*}
There are other choices of scale invariant metrics. For example, one could use integrals over curvature
terms as weights. A scale invariant metric of order zero that induces positive geodesic distance is induced by the operator:
$$P=\Vol^{-\frac{2}{m}-1}+\Vol^{-1}\|\Tr^g(S)\|^2\;.$$
However, in this paper we decided to focus on
volume weighted Sobolev metrics since they seem to be the most natural examples.
\end{remark*}
In order to calculate the geodesic equation we need to calculate the adjoint operators.
\begin{lem*}
The adjoint of $\nabla P_i$ defined in section~\ref{so:ad} for the operator $P_i=\De^i$ is given by:
\begin{align*}
\adj{\nabla \De^i}(h,k)&=
2\sum_{l=0}^{i-1}\Tr\big(g\i S g\i \g(\nabla\Delta^{i-l-1}h,\nabla\Delta^{l}k ) \big)
\\&\qquad
+\sum_{l=0}^{i-1} \big(\nabla^*\g(\nabla\Delta^{i-l-1}h,\Delta^{l}k) \big) \Tr^g(S)\\
&\qquad+\sum_{l=0}^{i-1}\Tr^g\big(R^{\g}(\Delta^{i-l-1}h,\nabla\Delta^{l}k)Tf \big)\\
&\qquad-\sum_{l=0}^{i-1}\Tr^g\big(R^{\g}(\nabla\Delta^{i-l-1}h,\Delta^{l}k)Tf \big)
\\&\qquad
+Tf.\Big[\grad^g \g(\Delta^i h,k)-\big(\g(\Delta^i h,\nabla k)+\g(\nabla h,\Delta^i k)\big)^\sharp\Big].
\end{align*}
\end{lem*}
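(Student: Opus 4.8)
The plan is to compute $\adj{\nabla \Delta^i}(h,k)$ by directly expanding the defining identity
\[
\int_M \g\big((\nabla_m \Delta^i)h,k\big)\,\vol(g)=\int_M \g\big(m,\adj{\nabla\Delta^i}(h,k)\big)\,\vol(g),
\]
using the Leibniz rule for $\nabla_m$ applied to the $i$-fold composition $\Delta^i = \Delta\circ\cdots\circ\Delta$. First I would establish the telescoping identity
\[
(\nabla_m\Delta^i)h = \sum_{l=0}^{i-1}\Delta^{l}\big((\nabla_m\Delta)(\Delta^{i-l-1}h)\big),
\]
which reduces everything to the single variational formula for $\nabla_m\Delta$ acting on a vector field along $f$. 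That first-order formula is exactly the content recalled in the appendix (section \ref{ap:var}): it expresses $\nabla_m\Delta$ in terms of $m^\bot$ (through the second fundamental form $S$, a Laplacian term, and an ambient curvature term $R^\g$) and $m^\top$ (through $\grad^g$ of appropriate functions). Plugging this in gives, for each $l$, an integrand that is a combination of $\g(m^\bot,\cdot)$-type terms and one $\g(m^\top,\cdot)$-type term, the latter paired against the summand $\Delta^l k$ contracted with $\Delta^{i-l-1}h$.

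Next I would move the self-adjoint factors $\Delta^l$ off of the inner expression and onto $k$: since $\Delta$ is symmetric with respect to the $H^0$-pairing on sections along $f$, one has $\int_M \g(\Delta^l \alpha,\beta)\vol(g) = \int_M \g(\alpha,\Delta^l\beta)\vol(g)$, which turns $\Delta^{l}\big((\nabla_m\Delta)(\Delta^{i-l-1}h)\big)$ paired with $k$ into $(\nabla_m\Delta)(\Delta^{i-l-1}h)$ paired with $\Delta^l k$. After this, in each term one integrates by parts to peel $m^\bot$ (resp.\ $m^\top$, i.e.\ $Tf.m^\top$) out of all derivatives, converting $\Delta$ and $\nabla$ acting on $m$ into $\nabla^*$ and $\grad^g$ acting on the coefficient tensors built from $h,k,S,R^\g$. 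The curvature terms come in the antisymmetric pair $R^\g(\Delta^{i-l-1}h,\nabla\Delta^l k)Tf - R^\g(\nabla\Delta^{i-l-1}h,\Delta^l k)Tf$ visible in the statement; these arise from commuting $\nabla_m$ past a covariant derivative and from the identity $\nabla_m(\Delta\alpha)-\Delta(\nabla_m\alpha)$ producing an $R^\g$-contribution, so I would track these commutators carefully. The purely tangential contributions from every $l$ collapse, by the Lemma in section~\ref{so:ad} characterising $\adj{\nabla P}(h,k)^\top$ via reparametrization invariance of $P_i=\Delta^i$, into the single final line $Tf.\big[\grad^g\g(\Delta^i h,k)-(\g(\Delta^i h,\nabla k)+\g(\nabla h,\Delta^i k))^\sharp\big]$; this is a useful consistency check rather than something that needs re-deriving.

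The main obstacle is bookkeeping: correctly collecting and simplifying the sums over $l=0,\dots,i-1$, keeping straight which arguments carry how many powers of $\Delta$ and one extra $\nabla$, and ensuring that all the integration-by-parts adjoints ($\nabla^*$ versus $\Delta$, the distinction between $\g$ and $\g\otimes g^0_0$ pairings) land on the right factors so that the $\bot$-projections in the Laplacian term $(\Delta(\Tr^g(S).\g(h,k)))^\bot$ and in the curvature term are placed consistently with the order-zero case already computed for $G^A$. Once the telescoping identity and the single-step formula for $\nabla_m\Delta$ are in hand, the rest is a controlled, if lengthy, computation; I would organize it by handling one generic summand $l$ in full and then summing, citing the appendix for each variational formula used.
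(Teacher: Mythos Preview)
Your plan is correct and is precisely the computation the paper has in mind: the paper's own proof is a one-line deferral to \cite[lemma 8.2]{Michor119}, where exactly this telescoping of $(\nabla_m\Delta^i)h=\sum_{l=0}^{i-1}\Delta^l\big((\nabla_m\Delta)(\Delta^{i-l-1}h)\big)$, followed by the appendix formula for $\nabla_m\Delta$, self-adjointness of $\Delta$ to shift $\Delta^l$ onto $k$, and integration by parts to isolate $m$, is carried out. Your use of the lemma in section~\ref{so:ad} to shortcut the tangential part is also exactly how the paper handles $\adj{\nabla P_i}^\top$ throughout, so nothing is missing.
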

\begin{proof}
The proof of this lemma is similar to the calculation of the adjoint for $P=1+A\Delta^p$ in \cite[lemma 8.2]{Michor119}.
\end{proof}
Using this we can calculate the geodesic equation for scale invariant Sobolev type metrics.
\begin{thm*}
The geodesic equation for the weighted Sobolev-type metric of order $p$ on shape space is equivalent to the set of equations
for a path of immersions $f$:
\begin{align*}
p &=Pf_t= \sum_{i=0}^{p} \Vol^{-\frac{2(i-1)}{m}-1}\Delta^i f_t \otimes \vol(g), \qquad Pf_t = f_t^\bot\in\Nor(f), \\
(\nabla_{\p_t}p)^\hor&= \frac12\sum_{i=0}^p \Vol^{-\frac{2(i-1)}{m}-1} \bigg(
2\sum_{l=0}^{i-1}\Tr\big(g\i S g\i \g(\nabla\Delta^{i-l-1}f_t,\nabla\Delta^{l}f_t ) \big)
\\&\qquad\qquad\qquad
+\sum_{l=0}^{i-1} \big(\nabla^*\g(\nabla\Delta^{i-l-1}f_t,\Delta^{l}f_t) \big) \Tr^g(S)\\&\qquad\qquad\qquad
+\sum_{l=0}^{i-1}\Tr^g\big(R^{\g}(\Delta^{i-l-1}f_t,\nabla\Delta^{l}f_t)Tf \big)\\
&\qquad\qquad\qquad-\sum_{l=0}^{i-1}\Tr^g\big(R^{\g}(\nabla\Delta^{i-l-1}f_t,\Delta^{l}f_t)Tf \big)
\\&\qquad\qquad\qquad-
\frac{1}{\Vol}\int_M \g(\Delta^i f_t,f_t)\vol(g)\Tr^g(S)\\&\qquad\qquad\qquad\qquad\qquad\qquad\qquad
 -\g(\Delta^i f_t,f_t).\Tr^g(S)\Big)\bigg)\otimes\vol(g)
\end{align*}
For $p\geq 1$ the operator $P$ satisfies all conditions of \cite[theorem 6.5]{Michor119} and therefore the geodesic equation is well-posed.
\end{thm*}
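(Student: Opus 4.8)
The plan is to read off the geodesic equation from the general shape-space result Theorem~\ref{ge_eq_ge_sh} by specializing to the operator $P=\sum_{i=0}^p\Phi_i(\Vol)P_i$ with $\Phi_i(\Vol)=\Vol^{-\frac{2(i-1)}{m}-1}$ and $P_i=\Delta^i$, and then to verify that the hypotheses of the well-posedness theorem \cite[theorem 6.6]{Michor119} are met when $p\ge1$. First I would substitute the adjoint formula for $\nabla\Delta^i$ from the lemma above into the right-hand side of Theorem~\ref{ge_eq_ge_sh}. The last line of that adjoint formula, $Tf.\big[\grad^g\g(\Delta^i h,k)-(\g(\Delta^i h,\nabla k)+\g(\nabla h,\Delta^i k))^\sharp\big]$, is purely tangential and is killed by the normal projection $(\,\cdot\,)^\bot$ appearing in the geodesic equation (this is exactly the cancellation of the tangential part of the adjoint recorded in section~\ref{so:ad}). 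Inserting the weights $\Phi_i(\Vol)=\Vol^{-\frac{2(i-1)}{m}-1}$, for which $\Phi_i'(\Vol)/\Phi_i(\Vol)$ is a constant multiple of $\Vol^{-1}$, together with this adjoint, then gives the displayed equation; the momentum and horizontality conditions $p=Pf_t\otimes\vol(g)$ and $Pf_t=f_t^\bot$ are Lemma~\ref{hor_bun} specialized to this $P$.

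For the well-posedness claim I would check the conditions reviewed in (\ref{intr:sobolev}.2). The operator $P$ is $\Diff(M)$-invariant because $\Vol$ and the Bochner Laplacian $\Delta^{f^*\g}$ are equivariant; it is elliptic of order $2p$ since its leading term $\Vol^{-\frac{2(p-1)}{m}-1}\Delta^p$ has a strictly positive coefficient and $\Delta^p$ is elliptic; and it is $H^0$-symmetric and positive definite since each $\Delta^i$ is symmetric and positive semidefinite with respect to $H^0$, the coefficients $\Vol^{-\frac{2(i-1)}{m}-1}$ are positive, and the term $i=0$, namely $\Vol^{\frac{2}{m}-1}\Id$, contributes strict positivity. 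Hence $P$ is invertible on smooth sections. It remains to see that $Ph$, $(\nabla P)(h,k)$ and $(\adj{\nabla P}(h,k))^\bot$ have the required composition structure: in local trivializations each is built from the linear differential operators $\Delta^i$ in $h,k$ (of order $\le 2p$), from nonlinear differential operators in the footpoint $f$ whose coefficients involve $S$, $R^\g$ and $\hat\nabla$-derivatives of $f$ up to finite order, and from the single nonlocal ingredient $f\mapsto\Vol(f)$ together with its derivative $D_{(f,m)}\Vol=-\int_M\g(m,\Tr^g(S))\vol(g)$. By the remark in the introduction the operator $\vol(f^*\g)\mapsto\Vol(f)$ is simple enough that \cite[theorem 6.6]{Michor119} still applies. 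A short order count finishes the argument: each summand of $\adj{\nabla\Delta^i}$ has order $\le 2i-1\le 2p-1$ in each of $h,k$, so composing the right-hand side of the geodesic equation with $P^{-1}$, which gains $2p$ derivatives, actually gains at least one derivative; for $p\ge1$ this puts the equation in the form covered by the well-posedness theorem, and the Riemannian exponential map is a local diffeomorphism.

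The main obstacle is the last point: the bookkeeping that shows $\nabla P$ and $\adj{\nabla P}^\bot$ are genuinely compositions of local operators (in $f$) and linear operators (in $h,k$) of total order $\le 2p$, and the justification that the non-pseudodifferential volume operator does not spoil the argument of \cite[theorem 6.6]{Michor119}. The first half is essentially the content of the adjoint lemma above (itself proved by iterated partial integration in the spirit of \cite[lemma 8.2]{Michor119}), so once that lemma is in hand the remaining task is mostly collecting these facts and invoking the extended well-posedness statement. I would also point out that for $p=0$ the argument breaks down, since then $P=\Vol^{\frac{2}{m}-1}\Id$ is of order zero and no longer dominates the right-hand side of the geodesic equation; this is exactly why the hypothesis $p\ge1$ is needed.
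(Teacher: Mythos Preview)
Your proposal is correct and follows essentially the same approach as the paper. The paper does not give a separate proof of this theorem; the argument is implicit in the organization of section~\ref{sec:scale_inv}: compute $\adj{\nabla\Delta^i}$ in the preceding lemma and then plug into the general shape-space geodesic equation of theorem~\ref{ge_eq_ge_sh}, exactly as you describe, with the well-posedness claim simply asserted by reference to \cite{Michor119}. If anything, your sketch is more detailed than the paper on the verification of the hypotheses in (\ref{intr:sobolev}.2) and on why $p\ge1$ is needed.
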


\section{Appendix: The covariant derivative over immersions and some variational
formulas}\label{sec:variation}

In this appendix we will first recall a concept that has been introduced in \cite{Michor119},
namely a covariant derivative on the manifold of immersions that is induced by the metric $\g$ on the ambient space $N$.
This covariant derivative is used to calculate the metric gradients and to express the geodesic equation
both on the manifold of immersions and on shape space.
In the second part we will present some variational formulas, mainly without proof.
For the missing proofs see for example \cite{Michor118}, \cite{Michor119}, \cite{Michor120},
\cite{Besse2008}, \cite{Bauer2010}, \cite{Harms2010}.

\subsection{Covariant derivative  on immersions}\label{ap:cov}
This section is taken from \cite{Michor119}.
Let $\nabla^{\g}$ be the covariant derivative on $N$ with respect to the metric $\g$.
This covariant derivative induces a
\emph{covariant derivative over immersions} as follows.
Let $Q$ be a smooth manifold. Then one identifies
\begin{align*}
&h \in  C^\infty\big(Q,T\Imm(M,N)\big) && \text{and} && X \in \X(Q)
\intertext{with}
&h^{\wedge} \in C^\infty(Q \x M, TN) && \text{and} && (X,0_M) \in \X(Q \x M).
\end{align*}
Using the covariant derivative
$$\nabla^{\g}_{(X,0_M)} h^{\wedge} \in C^\infty\big(Q \x M, TN),$$
one can define
$$\nabla_X h = \left(\nabla^{\g}_{(X,0_M)} h^{\wedge}\right)^{\vee} \in C^\infty\big(Q,T\Imm(M,N)\big).$$
This covariant derivative is torsion-free (see \cite{Michor119}).
It respects the metric $\g$ but in general does not respect $G$.

It is helpful to point out some special cases of how this construction can be used.
The case $Q=\R$ will be important to formulate the geodesic equation.
The expression that will be of interest in the formulation of the
geodesic equation is $\nabla_{\p_t} f_t$, which is
well-defined when $f:\R \to \Imm$ is a path of immersions and $f_t: \R \to T\Imm$ is its velocity.

Another case of interest is $Q = \Imm$. Let $h, k, m \in \X(\Imm)$. Then the covariant
derivative $\nabla_m h$ is well-defined and tensorial in $m$.
Requiring $\nabla_m$ to respect the grading of the spaces of multilinear maps, to act as a derivation
on products and to commute with compositions of multilinear maps, one obtains a covariant
derivative $\nabla_m$ acting on all mappings into
the natural bundles of multilinear mappings over $\Imm$.
In particular, $\nabla_m P$ and $\nabla_m G$ are well-defined for
\begin{align*}
P \in \Ga\big(L(T\Imm;T\Imm)\big), \quad
G \in \Ga\big(L^2_{\on{sym}}(T\Imm;\R)\big)
\end{align*}
by the usual formulas
\begin{align*}
(\nabla_m P)(h) &= \nabla_m\big(P(h)\big) - P(\nabla_mh), \\
(\nabla_m G)(h,k) &= \nabla_m\big(G(h,k)) - G(\nabla_m h,k) - G(h,\nabla_m k).
\end{align*}

\subsection{Variational formulas}\label{ap:var}
Recall that many operators like
$$g=f^*\g, \quad S=S^f, \quad \on{vol}(g), \quad \nabla=\nabla^g, \quad \Delta=\Delta^g, \quad \ldots$$
implicitly depend on the immersion $f$. In this section we will present their derivative
with respect to $f$, which we call \emph{the first  variation.}
The following lemma contains a compilation of results that
can be found for example in \cite{Harms2010}, \cite{Michor119}.

\begin{result*}
Let $f$ be an immersion and $f_t \in T_f\Imm$ a tangent vector to $f$.
We have:
\begin{enumerate}
\item Let   $F:\on{Imm}(M,N) \to \Gamma(T^r_s M)$ be a smooth mapping, that is  equivariant
  with respect to pullbacks by diffeomorphisms of $M$. Then the tangential variation of $F$ is its Lie-derivative:
  \begin{align*}
  D_{(f,Tf.f_t^\top)} F&=\L_{f_t^\top}\big(F(f)\big).
\end{align*}
\item The differential of the pullback metric $f^*\g$ is given by
  \begin{align*}
  D_{(f,f_t)} g&= 2\on{Sym}\g(\nabla f_t,Tf) = -2 \g(f_t^\bot,S)+2 \on{Sym} \nabla (f_t^\top)^\flat
  \\& = -2 \g(f_t^\bot,S)+ \L_{f_t^\top} g.
  \end{align*}
\item The differential of the inverse of the pullback metric is given by
\begin{align*}
D_{(f,f_t)} g\i = D_{(f,f_t)} (f^*\g)\i =2 \g(f_t^\bot, g\i S  g\i) + \mathcal L_{f_t^\top}(g\i).
\end{align*}
\item\label{variation:volume_form} The differential of the volume density
is given by
\begin{equation*}
D_{(f,f_t)} \on{vol}(g) =
\Tr^g\big(\g(\nabla f_t,Tf)\big) \on{vol}(g)=
\Big(\on{div}^{g}(f_t^{\top})-\g\big(f_t^{\bot},\Tr^g(S)\big)\Big) \on{vol}(g).
\end{equation*}
\item The differential of the total Volume
is given by
$$D_{(f,f_t)} \on{Vol}(f) = D_{(f,f_t)} \int_M \on{vol}(g)
=-\int_M \g\big(f_t^{\bot},\Tr^g(S)\big) \on{vol}(g). $$
\item For $\De \in \Ga\big(L(T\Imm;T\Imm)\big)$, $h \in T_f\Imm$ one has
\begin{align*}
(\nabla_{f_t} \Delta)(h) &=
\on{Tr}\big(g\i.(D_{(f,f_t)}g).g\i \nabla^2 h\big)
-\nabla_{\big(\nabla^*(D_{(f,f_t)} g)+\frac12 d\on{Tr}^g(D_{(f,f_t)}g)\big)^\sharp}h \\&\qquad
+\nabla^*\big(R^{\g}(f_t,Tf)h\big)
-\Tr^g\Big( R^{\g}(f_t,Tf)\nabla h \Big).
\end{align*}
\end{enumerate}
\end{result*}
For the proof of the above  formulas in a similar notation we refer the reader to  \cite{Michor119,Harms2010}.
The following lemma is concerned with the first variation of the vector valued mean curvature:
\begin{lem*}
The differential of the vector valued mean curvature
$$
\on{Imm} \to \Gamma(\Nor(f)),\quad
f \mapsto \Tr^g(S)
$$
is given by
\begin{align*}
D_{(f,f_t)}\Tr^g(S)
&=\Tr\big(2\g(f_t^{\bot},g\i.S.g\i).S\big)-\Delta(f_t^{\bot})\\&\qquad+\Tr^g(R^{\g}(f_t^\bot,Tf)Tf)+\L_{f_t^{\top}}\Tr^g(S).
\end{align*}
\end{lem*}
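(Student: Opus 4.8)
The plan is to prove the formula by a direct first-variation computation, splitting $f_t$ into its tangential and normal parts and treating the two separately.

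By linearity of $D_{(f,\cdot)}$ it suffices to handle $f_t=Tf.f_t^\top$ and $f_t=f_t^\bot$ on their own. The vector valued mean curvature $\Tr^g(S)$ is a natural construction out of $f$, equivariant under $\Diff(M)$ (a section of $\Nor(f)\subset f^*TN$ commuting with reparametrizations), so the same argument as in item~(1) of the variational formulas in~\ref{ap:var} gives the purely tangential contribution $D_{(f,Tf.f_t^\top)}\Tr^g(S)=\L_{f_t^\top}\Tr^g(S)$, which is the last term of the asserted formula. It thus remains to evaluate a purely normal variation.

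For a normal variation I would realize it as a smooth map $f:(-\ep,\ep)\x M\to N$ with $\p_t f|_{t=0}=f_t^\bot$, pass to $(-\ep,\ep)\x M$, and differentiate
$$\Tr^g(S^f)=g^{ij}\Big(\nabla^\g_{\p_i}(Tf.\p_j)-Tf.(\nabla^g_{\p_i}\p_j)\Big)$$
in local coordinate vector fields $\p_i$ on $M$. Three groups of terms arise. (i) When $\nabla^\g_{\p_t}$ hits the factor $g^{ij}$: by item~(3) of~\ref{ap:var} — the $\L_{f_t^\top}$ summand dropping since the variation is normal — one gets $D_{(f,f_t^\bot)}g\i=2\g(f_t^\bot,g\i Sg\i)$, and contracting against $S$ produces $\Tr\big(2\g(f_t^\bot,g\i Sg\i)S\big)$. (ii) When $\nabla^\g_{\p_t}$ hits the $TN$-valued fields $Tf.\p_j$ and $Tf.(\nabla^g_{\p_i}\p_j)$: using $[\p_t,\p_i]=0$ and torsion-freeness of $\nabla^\g$ one has $\nabla^\g_{\p_t}(Tf.X)=\nabla^\g_X f_t^\bot$ for $X$ a fixed field on $M$, and commuting $\nabla^\g_{\p_t}$ past $\nabla^\g_{\p_i}$ costs the ambient curvature $R^\g(\p_t f,\p_i f)$; tracing with $g\i$ then assembles the iterated covariant derivatives of $f_t^\bot$ into $-\Delta(f_t^\bot)$ (the Bochner–Laplace operator $\nabla^*\nabla$) together with $\Tr^g\big(R^\g(f_t^\bot,Tf)Tf\big)$.

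(iii) The remaining group is $-Tf.\big((D_{(f,f_t^\bot)}\nabla^g)(\p_i,\p_j)\big)$, coming from the variation of the induced Levi–Civita connection $\nabla^g$ of $g=f^*\g$. Here I would feed in item~(2) of~\ref{ap:var}, $D_{(f,f_t^\bot)}g=-2\g(f_t^\bot,S)$, through the standard formula expressing $D(\nabla^g)$ by covariant derivatives of $D(g)$, and use the Codazzi equation for $S=\nabla^\g df$, to show that the $g\i$-trace of this group combines with (i) and (ii) into exactly the stated expression. I expect step~(iii) to be the main obstacle: since $\nabla^g$ depends on $f$ to second order, its variation contributes terms that are first order in $f_t^\bot$, and checking that — after tracing with $g\i$, using that $f_t^\bot$ is normal, and invoking Codazzi — these collapse to the claimed form with the correct tangential/normal split is the delicate bookkeeping. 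Choosing at the point of evaluation a $g$-orthonormal frame that is $\nabla^g$-parallel there removes $\nabla^g$ and its first derivatives pointwise but not its variation, so the Codazzi identity is genuinely needed. Re-adding the tangential piece $\L_{f_t^\top}\Tr^g(S)$ from the second paragraph then yields the full formula.
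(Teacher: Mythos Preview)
The paper takes a shorter route. After the same equivariance argument for the tangential part, it differentiates $\Tr^g(S)=\Tr(g\i S)$ by the product rule and computes $\p_t S(X,Y)$ in one stroke: using only the two interchange rules $\nabla_{\p_t}(Tf.Z)=\nabla_Z f_t$ and $\nabla_{\p_t}\nabla_X-\nabla_X\nabla_{\p_t}=R^{\g}(f_t,Tf.X)$, it obtains $\p_t S=\nabla^2 f_t+R^{\g}(f_t,Tf)Tf$, and tracing with $g\i$ yields exactly your (i)$+$(ii). No Codazzi identity and no variation of $\nabla^g$ enter.

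Your group (iii) is where the two approaches genuinely diverge. The paper's swap $\nabla_{\p_t}(Tf.\nabla_X Y)=\nabla_{\nabla_X Y}f_t$ implicitly treats $\nabla_X Y$ as $t$-independent, which is precisely what suppresses your (iii). That contribution, $-Tf.\Tr^g\big(D_{(f,f_t^\bot)}\nabla^g\big)$, is purely tangential and does \emph{not} vanish in general (for the unit circle in $\R^2$ with $f_t^\bot=a\nu$ it equals $-a'\,Tf.\p_\theta$), so it will not collapse via Codazzi into the stated right-hand side as you anticipate. The upshot is that your (i)$+$(ii) already reproduce the formula as stated, and (iii) is a tangential remainder not present there. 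Since downstream only $\g\big(\Tr^g(S),D_{(f,f_t)}\Tr^g(S)\big)$ is ever used and $\Tr^g(S)$ is normal, this tangential discrepancy is immaterial for the applications; but you should not expect step (iii) to close up to the displayed formula.
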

\begin{proof}
By formula 1 of the above result, the formula for the tangential variation
follows from the equivariance of the vector valued mean curvature  form with respect to pullbacks by
diffeomorphisms. For the variation in normal direction we calculate:
\begin{align*}
\p_t \Tr^g(S)&
=\p_t \Tr(g\i.S)= \Tr\big((\p_tg\i).S\big)+ \Tr\big(g\i.\p_t S\big)
\end{align*}
Using the formula for the variation of the inverse metric it remains to calculate the derivative (in normal direction) of the vector valued shape form.
By definition $S(X,Y)=\nabla_X (Tf.Y)-Tf.\nabla_X Y$. Therefore we have
\begin{align*}
\p_t S^f(X,Y)
&=\nabla_{\p_t}\nabla_X (Tf.Y)-\nabla_{\p_t}Tf.\nabla_X Y\\
&=\nabla_X\nabla_Y Tf.\p_t+R^{\g}(Tf\p_t,TfX)TfY-\nabla_{\nabla_X Y}Tf.\p_t\\
&=\nabla_X\nabla_Y f_t-\nabla_{\nabla_X Y}f_t+R^{\g}(f_t,TfX)TfY\\
&=\nabla^2 f_t+R^{\g}(f_t,Tf)Tf,
\end{align*}
where we used the following two formulas for interchanging covariant derivatives (see \cite{Michor119}):
\begin{equation*}
\nabla_{(\p_t,0_M)} Tf.(0_{\R},Y)-\nabla_{(0_{\R},Y)} Tf.{(\p_t,0_M)} = 0
\end{equation*}
and
\begin{equation*}
\nabla_{(\p_t,0_M)} \nabla_{(0_\R,Y)} h - \nabla_{(0_\R,Y)} \nabla_{(\p_t,0_M)} h
\\= R^{\g} \big(Tf.(\p_t,0_M),Tf.(0_\R,Y)\big) h .
\end{equation*}
Thus we get
\begin{align*}
\p_t \Tr^g(S)&
=\p_t \Tr(g\i.S)= \Tr\big((\p_tg\i).S\big)+ \Tr\big(g\i.\p_t S\big)\\&=
\Tr\big(2 \g(f_t^\bot, g\i S  g\i).S\big) + \Tr\big(g\i.\nabla^2 f_t^{\bot}\big)\\&\qquad
+\Tr\big(g\i R^{\g}(f_t^\bot,Tf)Tf\big)+\L_{f_t^{\top}}\Tr^g(S)
\\&=
 \Tr\big(2\g(f_t^{\bot},g\i.S.g\i).S\big)-\Delta(f_t^{\bot})
\\&\qquad
 +\Tr\big(g\i R^{\g}(f_t^\bot,Tf)Tf\big)+\L_{f_t^{\top}}\Tr^g(S)\qedhere
\end{align*}
\end{proof}

\bibliographystyle{plain}

\begin{thebibliography}{10}

\bibitem{MBMB2011}
M.~Bauer  and M.~Bruveris.
\newblock A new Riemannian setting for surface registration.
\newblock {\em 3nd MICCAI Workshop on Mathematical Foundations of Computational Anatomy}
182--194, 2011.

\bibitem{Michor128}
M.~Bauer, M.~Bruveris, C.~Cotter, S.~Marsland, and P.W.~Michor.
\newblock Constructing reparametrization invariant metrics on spaces of plane curves. 
\newblock \url{arXiv:1207.5965}.

\bibitem{Michor122}
M.~Bauer, M.~Bruveris, P.~Harms, and P.W. Michor.
\newblock Vanishing geodesic distance for the riemannian metric with geodesic
  equation the KdV-equation.
\newblock {\em Ann. Global Analysis Geom.} 41, 4 (2012) 461-472.

\bibitem{Michor124}
M.~Bauer, M.~Bruveris, P.~Harms,  and P.W. Michor.
\newblock Geodesic distance for right invariant Sobolev metrics of fractional order on the diffeomorphism group. 
\newblock {\em Ann. Glob. Anal. Geom.} 
doi:10.1007/s10455-012-9353-x.
\newblock \url{arXiv:1105.0327}.

\bibitem{Michor118}
M.~Bauer, P.~Harms, and P.W. Michor.
\newblock Almost local metrics on shape space of hypersurfaces in n-space.
\newblock {\em  SIAM J. Imaging Sci.} 5 (2012), 244-310.

\bibitem{Michor120}
M.~Bauer, P.~Harms, and P.W. Michor.
\newblock Curvature weighted metrics on shape space of hypersurfaces in
  n-space.
\newblock {\em Differential Geometry and its Applications.} 30 (2012), 33-41.

\bibitem{Michor119}
M.~Bauer, P.~Harms, and P.W. Michor.
\newblock Sobolev metrics on shape space of surfaces
\newblock {\em Journal of Geometric Mechanics} 3, 4 (2011), 389-438.

\bibitem{Michor123}
M.~Bauer, P.~Harms, and P.W. Michor.
\newblock Sobolev metrics on the manifold of all Riemannian metrics. 
\newblock To appear in: {\em Journal of Differential Geometry.} 
\newblock \url{arXiv:1102.3347}.

\bibitem{Bauer2010}
M. Bauer.
\newblock {\em Almost local metrics on shape space of surfaces}.
\newblock PhD thesis, University of Vienna, 2010.

\bibitem{Besse2008}
A.~L. Besse.
\newblock {\em Einstein manifolds}.
\newblock Classics in Mathematics. Springer-Verlag, Berlin, 2008.

\bibitem{Harms2010}
P. Harms.
\newblock {\em Sobolev metrics on shape space of surfaces}.
\newblock PhD thesis, University of Vienna, 2010.

\bibitem{Michor102}
P.~W. Michor and D. Mumford.
\newblock Vanishing geodesic distance on spaces of submanifolds and
  diffeomorphisms.
\newblock {\em Doc. Math.}, 10:217--245 (electronic), 2005.

\bibitem{Michor98}
P.~W. Michor and D. Mumford.
\newblock Riemannian geometries on spaces of plane curves.
\newblock {\em J. Eur. Math. Soc. (JEMS) 8 (2006), 1-48}, 2006.

\bibitem{Michor107}
P.~W. Michor and D. Mumford.
\newblock An overview of the {R}iemannian metrics on spaces of curves using the
  {H}amiltonian approach.
\newblock {\em Appl. Comput. Harmon. Anal.}, 23(1):74--113, 2007.

\bibitem{Peetre59}
J. Peetre.
\newblock Une caract\'erisation abstraite des op\'erateurs diff\'erentiels.
\newblock {\em Math. Scand.}, 7:211--218, 1959.

\bibitem{Peetre60}
J. Peetre.
\newblock R\'ectification \`a l'article ``{U}ne caract\'erisation abstraite des
  op\'era\-teurs diff\'eren\-tiels''.
\newblock {\em Math. Scand.}, 8:116--120, 1960.

\bibitem{Shah2008}
J. Shah.
\newblock {$H\sp 0$}-type {R}iemannian metrics on the space of planar curves.
\newblock {\em Quart. Appl. Math.}, 66(1):123--137, 2008.

\bibitem{Shubin1987}
M.~A. Shubin.
\newblock {\em Pseudodifferential operators and spectral theory}.
\newblock Springer Series in Soviet Mathematics. Springer-Verlag, Berlin, 1987.

\bibitem{Slovak88}
Jan Slov{\'a}k.
\newblock Peetre theorem for nonlinear operators.
\newblock {\em Ann. Global Anal. Geom.}, 6(3):273--283, 1988.

\bibitem{YezziMennucci2004}
A.~Yezzi and A.~Mennucci.
\newblock Conformal riemannian metrics in space of curves.
\newblock EUSIPCO, 2004.

\bibitem{YezziMennucci2004a}
A.~{Yezzi} and A.~{Mennucci}.
\newblock {Metrics in the space of curves}.
\newblock \url{arXiv:math/0412454}, 2004.

\bibitem{YezziMennucci2005}
A. Yezzi and A. Mennucci.
\newblock Conformal metrics and true "gradient flows" for curves.
\newblock In {\em Proceedings of the Tenth IEEE International Conference on
  Computer Vision}, volume~1, pages 913--919, Washington, 2005. IEEE Computer
  Society.

\bibitem{Michor111}
 L. Younes, P. W. Michor, J. Shah, D. Mumford.
\newblock A Metric on Shape Space with Explicit Geodesics.
\newblock {\em Rend. Lincei Mat. Appl.} 9 (2008) 25-57.

\end{thebibliography}

\end{document}